\theoremstyle{plain}
\newtheorem{theorem}{Theorem}[section]
\newtheorem{prop}[theorem]{Proposition}
\theoremstyle{definition}
\newtheorem{conj}{Conjecture}
\theoremstyle{remark}
\newcommand{\bbR}{\mathbb{R}}
\newcommand{\bbC}{\mathbb{C}}
\newcommand{\bbD}{\mathbb{D}}
\newcommand{\bbN}{\mathbb{N}}
\newcommand{\mco}{\mathcal{O}}
\newcommand{\eitheta}{e^{i\theta}}
\newcommand{\nri}{n\rightarrow\infty}
\newcommand{\bnri}{N\rightarrow\infty}
\DeclareMathOperator*{\Real}{Re}
\begin{document}
\title[ ] {Torsional Rigidity and Bergman Analytic Content of Simply Connected Regions}

\bibliographystyle{plain}

\thanks{  }

%\date{ \today}
%\date{}

\maketitle

%\author{
% \thanks { Address
%     }
% \and Author 2
%  \thanks{Address
%    e-mail}
 %   }

%\centerline{ Brian Simanek}

%\vspace{2mm}

\begin{center}
\textbf{Matthew Fleeman $\&$ Brian Simanek}\\
%\textit{\small{Baylor University Department of Mathematics\\
%One Bear Place $\#$97328, Waco, TX 76798}}\\
%\small{Brian$\_$Simanek@baylor.edu}
\end{center}

%\vspace{2mm}

\begin{abstract}
We exploit the equality of Bergman analytic content and torsional rigidity of a simply connected domain to develop a new method for calculating these quantities.  This method is particularly suitable for the case when the region in question is a polygon.  A large number of examples are computed in explicit detail to demonstrate the utility of our ideas.
\end{abstract}

\vspace{8mm}

\noindent \textbf{Keywords}:  Torsional rigidity, Bergman analytic content, Bergman polynomials

\smallskip

\noindent \textbf{AMS Subject Classifications}: Primary: 41A10, Secondary: 74P10

\section{Introduction}\label{intro}

Let $\Omega\subseteq\bbC$ be a bounded and simply connected Jordan domain.  Associated to such a region is its \textit{torsional rigidity}, which quantifies the resistance to twisting of a cylindrical beam with the given cross-section.  We denote the torsional rigidity of $\Omega$ by $\rho(\Omega)$.  There are several ways to formulate this quantity mathematically, but there is a particularly useful formula for simply connected regions that is given in \cite{PS}, namely
\begin{align}\label{rhodef}
\rho(\Omega):=\sup_{u\in C^1_0(\bar{\Omega})}\frac{4\left(\int_{\Omega}u(z)dA(z)\right)^2}{\int_{\Omega}|\nabla u(z)|^2dA(z)},
\end{align}
where $dA$ denotes area measure on $\Omega$ and $C^1_0(\bar{\Omega})$ denotes the set of all differentiable functions on $\Omega$ that vanish on the boundary of $\Omega$.  The supremum in this definition is in fact a maximum, and we will call any function for which the maximum is attained a \textit{stress function} for the region $\Omega$.  It is known that the solution to the boundary value problem
\begin{align}\label{bproblem}
\begin{cases}
\Delta v=-2,\qquad\qquad z\in\Omega\\
v\big|_{\partial\Omega}=0
\end{cases}
\end{align}
is a stress function (see \cite[page 88]{PS}), which we will denote by $\nu(z;\Omega)$ (or simply $\nu(z)$ if the region is clear).  With this notation, one has (see \cite[page 88]{PS})
\begin{align}\label{nofrac}
\rho(\Omega)=2\int_{\Omega}\nu(z;\Omega)dA(z).
\end{align}
It is easy to see that if $u(z)$ solves the Dirichlet Problem:
\begin{align*}
\begin{cases}
\Delta u=0,\qquad\qquad z\in\Omega\\
u\big|_{\partial\Omega}=\frac{|z|^2}{2},
\end{cases}
\end{align*}
then the stress function is given by $\nu(z;\Omega)=u(z)-\frac{|z|^2}{2}$.

Here are some basic facts about the torsional rigidity of simply connected Jordan regions, all of which follow easily from our discussion so far:
\begin{itemize}
\item for any $c\in\bbC$, $\rho(\Omega+c)=\rho(\Omega)$,
\item for any $r\in\bbC$, $\rho(r\Omega)=|r|^4\rho(\Omega)$,
\item if $\Omega_1$ and $\Omega_2$ are simply connected and $\Omega_1\subseteq\Omega_2$, then $\rho(\Omega_1)\leq\rho(\Omega_2)$,
\item if $\bbD=\{z:|z|<1\}$, then $\rho(\bbD)=\pi/2$.
\end{itemize}
This last equality follows from the fact that the boundary value problem (\ref{bproblem}) has explicit solution $(1-|z|^2)/2$.

If we consider the supremum in (\ref{rhodef}), the extremal function must have an average value on $\Omega$ that is as large as possible subject to the constraint that the gradient must also have a small average value.  This means the extremal function $\nu$ is $0$ on the boundary of $\Omega$ and gets to be as large as possible without growing too quickly.  Thus, domains with large torsional rigidity must have as much area as possible that is far away from the boundary.  It is this intuition that would lead one to conjecture that disks have maximal torsional rigidity among regions with fixed area.  This turns out to be correct and is a result known as Saint-Venant's inequality (see \cite{Pol} and \cite[page 121]{PS}).

\smallskip

Another property of $\Omega$ that we will be interested in is its \textit{Bergman analytic content}, which we define as in \cite{FK} to be the distance from $\bar{z}$ to the space of analytic functions in $L^2(\Omega,dA)$.  One motivation for the study of this object comes from operator theory.  Indeed, let $A^2(\Omega)$ be the Bergman Space, that is, the space of all analytic functions on $\Omega$ that lie in $L^2(\Omega,dA)$.  If $\psi$ is a conformal bijection of $\bbD$ with $\Omega$, then one can consider the operator $T_{\psi}:A^2(\bbD)\rightarrow A^2(\bbD)$ given by $T_{\psi}f=\psi f$.  The operator norm of the commutator $[T_{\psi}^*,T_{\psi}]$ is equal to
\[
\sup_{g\in A^2_1(\Omega)}\inf_{f\in A^2(\Omega)}\int_{\Omega}|\bar{z}g-f|^2dA(z),
\]
where $A^2_1(\Omega)$ is the unit ball in $A^2(\Omega)$ (see \cite[Proof of Theorem 2]{FK1}).  Therefore, if $\Omega$ has area $1$, then the square of the Bergman analytic content of $\Omega$ gives a lower bound on the norm of the commutator $[T_{\psi}^*,T_{\psi}]$.  By Putnam's Inequality (see \cite[Theorem 2.1]{PMBook}), this norm lower bounds the area of the spectrum of $T_{\psi}$.

If we imagine an arc $\Gamma$, then under suitable smoothness hypotheses on $\Gamma$, there exists a function $\phi(z)$ that is analytic in a neighborhood of $\Gamma$ and such that $\phi(z)=\bar{z}$ when $z\in\Gamma$.  Of course, $\phi(z)$ is the Schwarz function of $\Gamma$.  If $\Omega$ is very well approximated by $\Gamma$, then we expect $\phi(z)$ to be close to $\bar{z}$ on all of $\Omega$, and hence the Bergman analytic content of $\Omega$ would be small in this case.  Conversely, we would expect the Bergman analytic content to be large for a domain $\Omega$ that cannot be well approximated by a curve.  In other words, curves with large interior relative to their boundary should have large Bergman analytic content.  This would lead one to conjecture that among regions with fixed area, the one with largest Bergman analytic content is the disk.  This turns out to be correct and it was proven in \cite{FK}.

\smallskip

The definition (\ref{rhodef}) defines torsional rigidity as a maximization problem, so lower bounds can be obtained by trial and error.  The extremal problem defining Bergman analytic content is a minimization problem, so upper bounds can be obtained by trial and error.  It was shown in \cite{FL} that the square of the Bergman analytic content of $\Omega$ is equal to $\rho(\Omega)$ when $\Omega$ is simply connected (as it is in the case we are considering), so in this case one can obtain both upper and lower bounds by trial and error.%One of our goals is to use this relationship between torsional rigidity and Bergman analytic content to prove new results about the torsional rigidity of simply connected Jordan regions.

There are several methods available for estimating the torsional rigidity of a Jordan region $\Omega$, some of which are discussed in \cite{Sok}.  For instance, if one can write down an explicit Taylor series for a conformal bijection between the unit disk and the region, then one can use this series to write down the stress function or compute the torsional rigidity (see \cite[pages 97 $\&$ 120]{PS} and \cite[Section 81]{Sok}).  One can similarly calculate the torsional rigidity of $\Omega$ if one has precise knowledge of the Dirichlet spectrum for the region (see \cite[page 106]{PS}).  Another approach requires the computation of the expected lifetime of a Brownian Motion (see \cite[Equations 1.8 and 1.11]{BVdbC} and \cite{HMP}) and another approach considers (\ref{rhodef}) over the space of web functions (see \cite{Avk,CFG,Makai}).  These methods are often very difficult to employ because the necessary information is only available for a small number of specific regions.

One can also use the geometry of the region $\Omega$ to estimate the torsional rigidity.  Examples of estimates derived in this way can be found in \cite{Avk,Makai} and can be quite powerful.  However, these bounds do not produce a sequence of approximants that converge to the actual value, so the best result one can obtain from such estimates is an order of magnitude calculation.  It is our goal to develop a new method for calculating torsional rigidity that is based on approximation theory in the Bergman space.  This method requires the ability to calculate the area moments of a region, which is often a manageable task, even for complicated regions.  Furthermore, by sending the order of approximation to infinity, the resulting estimates will converge to the true torsional rigidity of the region.

The method we propose is especially well suited to the case when $\Omega$ is a polygon.  This will allow us to pursue certain extremal problems related to torsional rigidity.  There is a substantial interest in extremal problems associated to torsional rigidity that are analogous to isoperimetric problems (see \cite{BFL,Lipton,OR,Salak,SZ,vdBBV}).  It has been conjectured for nearly seventy years that the $n$-gon with area $1$ having maximal torsional rigidity is the regular $n$-gon of area $1$ (see \cite{Pol}), though this conjecture remains unproven for $n\geq5$.  The extremal problem that we will tackle in this work is that of finding the right triangle with area $1$ and maximal torsional rigidity.  We conjecture that the answer is the isosceles right triangle and we verify this conjecture to within an error of approximately one half of one percent.  

\smallskip

The rest of the paper is organized as follows.  After a brief review of some important facts about orthogonal polynomials in Section \ref{op},  we discuss some methods for calculating the torsional rigidity of a simply connected Jordan region $\Omega$ in Section \ref{methods}.  One method we discuss is based on using the area moments of the region to calculate torsional rigidity using orthogonal polynomials.  Another method we discuss is based on function theory in the unit disk.  This second method is not new, but we present some new details and expose its relationship to Bergman analytic content.  In Section \ref{egs} we present some additional examples that yield particularly elegant calculations.

\section{Orthogonal Polynomials}\label{op}

One of our main tools for calculation in the next section will be orthogonal polynomials, so we will recall some important notions here, especially in the setting of area measure and measures on the unit circle.  The theory of orthogonal polynomials in these settings is very rich and deep, so we will focus our attention on specific facts that will be relevant for our later calculations.  For more detailed information, we refer the reader to the references \cite{Chihara,Nevai,Szego}.

\subsection{Bergman Polynomials}\label{bergpoly}

Given a bounded Jordan region $\Omega$, the Bergman polynomials $\{p_n\}_{n=0}^{\infty}$ are orthonormal with respect to area measure on $\Omega$.  Their asymptotic properties (as the degree becomes large) are often determined by the smoothness of the boundary.  Let $\gamma:\bbC\setminus\overline{\Omega}\rightarrow\bbC\setminus\overline{\bbD}$ be the conformal bijection that satisfies $\gamma(\infty)=\infty$ and $\gamma'(\infty)>0$.  Let $\eta$ be the inverse map to $\gamma$.  If $\eta$ extends to be univalent in the exterior of the circle of radius $r<1$, then we say that $\Omega$ has analytic boundary and for every $s> r$ we may define $\Omega^s$ to be the exterior of the image of the circle of radius $s$ under the map $\eta$.  Carleman showed in \cite{Carleman} that for any $s>r$ it holds that
\begin{align}\label{pasy}
p_n(z)=\sqrt{\frac{n+1}{\pi}}\gamma(z)^n\gamma'(z)(1+\varepsilon_n(z)),\qquad\quad z\in\overline{\Omega^s},
\end{align}
where $\varepsilon_n(z)$ converges to $0$ exponentially quickly and uniformly on $\overline{\Omega^s}$ as $\nri$.  Under weaker smoothness assumptions on $\partial\Omega$, one can obtain a similar result with slower decay in the error term (see \cite[Theorems 1.1 $\&$ 1.2]{Suetin}).

\subsection{The Unit Circle}\label{uc}

Orthogonal polynomials on the unit circle are an especially nice class of orthogonal polynomials because of the Szeg\H{o} recursion.  If $\{\Phi_n\}_{n=0}^{\infty}$ denotes the sequence of monic polynomials orthogonal with respect to some probability measure $\mu$ having infinite support in the unit circle, then the Szeg\H{o} recursion states that there is a sequence $\{\alpha_n\}_{j=0}^{\infty}\in\bbD^\bbN$ such that
\[
\Phi_{n+1}(z)=z\Phi_n(z)-\bar{\alpha}_{n}\Phi_n^*(z),
\]
where $\Phi_n^*(z)=z^n\overline{\Phi_n(1/\bar{z})}$ (see \cite[Theorem 1.5.2]{OPUC1}).  From this relation, it is clear that $\alpha_n=-\overline{\Phi_{n+1}(0)}$.  Verblunsky's Theorem states that the converse is also true, namely that any sequence in $\bbD^\bbN$ determines an infinitely supported probability measure on the unit circle (see \cite[Section 1.7]{OPUC1}).  The coefficients $\{\alpha_n\}_{n=0}^{\infty}$ corresponding to the measure $\mu$ are known by a variety of names in the literature, including Verblunsky coefficients, Geronimus coefficients, and reflection parameters.

The Szeg\H{o} recursion can be inverted to give the Inverse Szeg\H{o} recursion
\[
\Phi_n(z)=\frac{\Phi_{n+1}(z)+\bar{\alpha}_n\Phi_n^*(z)}{z(1-|\alpha_n|^2)}
\]
(see \cite[Theorem 1.5.4]{OPUC1}).  One consequence of this relation is that if we know the degree $n$ monic orthogonal polynomial for the measure $\mu$, then we can determine the degree $m$ monic orthogonal polynomial for $\mu$ for every $m=0,1,\ldots,n-1$.

\section{Methods for Calculating and Estimating $\rho(\Omega)$}\label{methods}

In this section we will discuss a two methods for the calculation and estimation of the torsional rigidity, stress function, or Bergman projection of $\bar{z}$ for a Jordan region $\Omega$.  Recall that $\rho(\Omega)$ denotes the torsional rigidity of $\Omega$ and $\nu(z;\Omega)$ denotes the stress function that solves the boundary value problem (\ref{bproblem}).  Also, we will denote by $Q(z)$ the projection of $\bar{z}$ to the Bergman space of $\Omega$ and we will let $\psi:\bbD\rightarrow\Omega$ be any conformal bijection.

Some of the results that we present below are in the form of numerics that indicate the utility of our methods.  All calculations in this section were done on a desktop computer using Mathematica version 10.

\subsection{The Moments Approach}\label{moment}

As mentioned in Section \ref{intro}, many methods for calculating $\rho(\Omega)$ require some particular knowledge such as the conformal bijection $\psi$ or the Dirichlet spectrum of the region $\Omega$.  In this section we will discuss an approach that allows us to calculate $Q(z)$ and $\rho(\Omega)$ through knowledge of the moments of the area measure on $\Omega$.  This is an especially useful method to have available when $\Omega$ is a polygon, which has easily computable moments, but for which the Dirichlet spectrum and conformal bijection with the disk are hard to write down explicitly.  To do this calculation, we let $\{p_n(z;\Omega)\}_{n\geq0}$ be the sequence of Bergman orthonormal polynomials for the region $\Omega$ and let $\{P_n(z;\Omega)\}_{n\geq0}$ be the sequence of monic Bergman polynomials for $\Omega$.  Define
\[
c_{i,j}=\langle z^i,z^j\rangle=\int_{\Omega}z^i\bar{z}^jdA(z)
\]
It is easy to show that if $n\geq1$, then
\[
R_n(z;\Omega):=\begin{vmatrix}
c_{0,0} & c_{1,0} & c_{2,0} & \cdots & c_{n,0}\\
c_{0,1} & c_{1,1} & c_{2,1} & \cdots & c_{n,1}\\
\vdots & \vdots & \vdots & \ddots & \vdots \\
c_{0,n-1} & c_{1,n-1} & c_{2,n-1} & \cdots & c_{n,n-1}\\
1 & z & z^2 & \cdots & z^n
\end{vmatrix}
=\sigma_nP_n(z;\Omega)
\]
for some constant $\sigma_n>0$. %(for future reference, we set $R_0=\sigma_0=c_{0,0}$).
Therefore, we can calculate the full sequence $\{p_n(z;\Omega)\}_{n\geq0}$ from knowledge of the area moments of $\Omega$.

By \cite[Theorem 2]{Farrell} we know that $\{p_n(z;\Omega)\}_{n\geq0}$ is an orthonormal basis for the Bergman space of $\Omega$, so we can write
\[
Q(z)=\sum_{n=0}^{\infty}\langle\bar{w},p_n(w;\Omega)\rangle p_n(z;\Omega)=\sum_{n=0}^{\infty}\langle1,wp_n(w;\Omega)\rangle p_n(z;\Omega),\qquad\qquad z\in\Omega
\]
  The sum converges because for any fixed $z\in\Omega$, the sequence $\{p_n(z;\Omega)\}_{n\geq0}$ is in $\ell^2(\bbN_0)$ (see \cite{Sima}).  We can estimate $Q$ (and hence $\rho$) by truncating this sum after only finitely many terms.  For every $N\in\bbN_0$, define
\begin{align*}
Q_N(z)&:=\sum_{n=0}^{N}\langle\bar{w},p_n(w;\Omega)\rangle p_n(z;\Omega)\\
\rho_N(\Omega)&:=\int_{\Omega}|z|^2dA(z)-\int_{\Omega}|Q_N(z)|^2dA(z).
\end{align*}
Clearly $\rho(\Omega)\leq\rho_N(\Omega)$ for each $N$, so this method will always yield upper estimates of the torsional rigidity.

One can see that
\[
\rho(\Omega)\leq\rho_N(\Omega)= c_{1,1}-\sum_{n=0}^N|\langle\bar{w},p_n(w;\Omega)\rangle|^2
\]
and
\[
|\langle\bar{w},p_n(w;\Omega)\rangle|^2=\frac{\left|\langle\bar{w},R_n\rangle\right|^2}{\|R_n\|^2}=\frac{\left|\langle\bar{w},R_n\rangle\right|^2}{\sigma_n\langle w^n,R_n\rangle}.
\]
Each factor in this expression can be expressed in terms of determinants.  Indeed, if $n\geq1$, then
\begin{align*}
\langle\bar{w},R_n\rangle=\begin{vmatrix}
c_{0,0} & c_{0,1} & c_{0,2} & \cdots & c_{0,n}\\
c_{1,0} & c_{1,1} & c_{1,2} & \cdots & c_{1,n}\\
\vdots & \vdots & \vdots & \ddots & \vdots \\
c_{n-1,0} & c_{n-1,1} & c_{n-1,2} & \cdots & c_{n-1,n}\\
c_{0,1} & c_{0,2} & c_{0,3} & \cdots & c_{0,n+1}
\end{vmatrix}
\end{align*}
\begin{align*}
\sigma_n=\begin{vmatrix}
c_{0,0} & c_{1,0} & c_{2,0} & \cdots & c_{n-1,0}\\
c_{0,1} & c_{1,1} & c_{2,1} & \cdots & c_{n-1,1}\\
\vdots & \vdots & \vdots & \ddots & \vdots \\
c_{0,n-2} & c_{1,n-2} & c_{2,n-2} & \cdots & c_{n-1,n-2}\\
c_{0,n-1} & c_{1,n-1} & c_{2,n-1} & \cdots & c_{n-1,n-1}
\end{vmatrix}
\end{align*}
\begin{align*}
\langle w^n,R_n\rangle=\begin{vmatrix}
c_{0,0} & c_{0,1} & c_{0,2} & \cdots & c_{0,n}\\
c_{1,0} & c_{1,1} & c_{1,2} & \cdots & c_{1,n}\\
\vdots & \vdots & \vdots & \ddots & \vdots \\
c_{n-1,0} & c_{n-1,1} & c_{n-1,2} & \cdots & c_{n-1,n}\\
c_{n,0} & c_{n,1} & c_{n,2} & \cdots & c_{n,n}
\end{vmatrix}
\end{align*}
Therefore, if we have convenient expressions for the moments $c_{i,j}$ of a region, then we can estimate the torsional rigidity by calculating determinants of specific square matrices.

For a general region, the formulas involved in these estimates become unwieldy as $\bnri$, but we can make some explicit estimates for small $N$.  Our estimates will involve the moments of area, which we define as
\[
I_{mn}=\int_{\Omega}x^my^ndxdy,\qquad\qquad m,n\in\bbN_0.
\]
As an example, we work through the calculations to show the best possible approximation to $\bar{z}$ using a polynomial of degree at most $1$ or $2$.  The estimate for polynomials of degree at most $1$, which we now present, is an improvement of the estimate given in \cite[page 111]{DW}.  Since the quantities we are interested in behave trivially under translation, we may always assume without loss of generality that the centroid of $\Omega$ is $0$. % In other words, we may assume that $I_{10}=I_{01}=c_{1,0}=0$.

\begin{theorem}\label{deg1}
Let $\Omega$ be a simply connected and bounded region in $\bbC$ whose centroid is equal to $0$.  Then the projection of $\bar{z}$ onto the polynomials of degree at most $1$ in the Bergman space is given by $\alpha z$, where
\[
\alpha=\frac{c_{0,2}}{c_{1,1}}.
\]
Consequently
\[
\rho(\Omega)\leq\rho_1(\Omega)=4\frac{I_{20}I_{02}-I_{11}^2}{I_{20}+I_{02}}
\]
\end{theorem}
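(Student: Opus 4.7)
The plan is to compute the orthogonal projection of $\bar z$ onto the two-dimensional subspace of analytic polynomials of degree at most $1$ in the Bergman space, namely $\mathrm{span}\{1,z\}$. Because $1$ and $z$ are already orthogonal (under the hypothesis that the centroid of $\Omega$ is $0$, we have $\langle 1,z\rangle=\overline{c_{1,0}}=0$), the projection is simply the sum of the two one-dimensional projections, so no Gram--Schmidt step is needed. This is the only place where the hypothesis that the centroid sits at the origin enters, and without it one would have to subtract off a constant term built from $c_{1,0}$.

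First I would record the four inner products that appear: $\langle 1,1\rangle=c_{0,0}$, $\langle z,z\rangle=c_{1,1}$, $\langle\bar w,1\rangle=\int_\Omega \bar w\,dA=c_{0,1}=0$, and $\langle\bar w,w\rangle=\int_\Omega \bar w^{\,2}\,dA=c_{0,2}$. The projection formula then immediately gives
\begin{equation*}
Q_1(z)=\frac{\langle\bar w,1\rangle}{\langle 1,1\rangle}\,1+\frac{\langle\bar w,w\rangle}{\langle w,w\rangle}\,z=\frac{c_{0,2}}{c_{1,1}}\,z,
\end{equation*}
which identifies $\alpha=c_{0,2}/c_{1,1}$ and confirms that $Q_1(z)$ is purely linear.

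Next I would compute $\rho_1(\Omega)$ from its definition $\rho_1(\Omega)=\int_\Omega|z|^2\,dA-\int_\Omega|Q_1(z)|^2\,dA$. Since $\{1,z\}$ are orthogonal, $\|Q_1\|^2=|\alpha|^2\,c_{1,1}=|c_{0,2}|^2/c_{1,1}$, giving
\begin{equation*}
\rho_1(\Omega)=c_{1,1}-\frac{|c_{0,2}|^2}{c_{1,1}}=\frac{c_{1,1}^{\,2}-|c_{0,2}|^2}{c_{1,1}}.
\end{equation*}
Finally, translating into the real area moments using $z=x+iy$ yields $c_{1,1}=I_{20}+I_{02}$ and $c_{0,2}=(I_{20}-I_{02})-2iI_{11}$, so $|c_{0,2}|^2=(I_{20}-I_{02})^2+4I_{11}^{\,2}$. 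A one-line algebraic identity, $(I_{20}+I_{02})^2-(I_{20}-I_{02})^2=4I_{20}I_{02}$, then collapses the numerator to $4(I_{20}I_{02}-I_{11}^{\,2})$, producing the stated formula. The inequality $\rho(\Omega)\le\rho_1(\Omega)$ is inherited from the general truncation bound already established in the section, so no separate argument is required.

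There is no serious obstacle here; the only subtlety worth flagging is the centroid hypothesis, which is exactly what makes $\{1,z\}$ an orthogonal (not merely linearly independent) pair and lets the projection formula degenerate to a single term in $z$.
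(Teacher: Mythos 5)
Your proposal is correct and follows essentially the same route as the paper: the centroid hypothesis makes $1$ and $z$ orthogonal, the projection reduces to $\frac{c_{0,2}}{c_{1,1}}z$, and the same Pythagorean computation plus the identities $c_{1,1}=I_{20}+I_{02}$ and $|c_{0,2}|^2=(I_{20}-I_{02})^2+4I_{11}^2$ yields the stated bound. The only cosmetic difference is that the paper phrases the argument via the orthonormal Bergman polynomials $p_0$ and $p_1$ rather than as a sum of one-dimensional projections.
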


\begin{proof}
The assumption that the centroid is $0$ tells us that
\[
p_0(z;\Omega)=|\Omega|^{-1/2}\qquad\qquad \mbox{ and }\qquad\qquad p_1(z;\Omega)=\frac{z}{\sqrt{c_{1,1}}}
\]
Therefore,
\[
Q_1(z)=\frac{c_{0,2}}{c_{1,1}}z
\]
We then calculate
\begin{align*}
\rho(\Omega)\leq\rho_1(\Omega)&=\int_{\Omega}|\bar{z}-Q_1(z)|^2dA(z)=\int_{\Omega}|\bar{z}|^2dA(z)-\int_{\Omega}\left|\frac{c_{0,2}}{\sqrt{c_{1,1}}}p_1(z)\right|^2dA(z)\\
&=c_{1,1}-\frac{|c_{0,2}|^2}{c_{1,1}}=\frac{4I_{20}I_{02}-4I_{11}^2}{I_{20}+I_{02}}.
\end{align*}
\end{proof}

\noindent\textit{Remark.}  Notice that the conclusion of Theorem 1.2 resembles Cauchy's 1829 estimate (see \cite[Equation 5]{Avk}).

\smallskip

\noindent\textit{Remark.}  It is worth pointing out that the expression $I_{20}+I_{02}$ that appears in the denominator of this estimate is the polar moment of inertia of $\Omega$.

\medskip

We will state a comparable result for $\rho_2(\Omega)$, but first we will make some simplifying assumptions.  In Theorem \ref{deg1}, we used the assumption that the centroid of $\Omega$ is zero so that $c_{1,0}=I_{10}=I_{01}=0$.  Since the centroid of a region is rotation invariant, we can also choose the orientation of the region $\Omega$ in a way that works to our advantage. The Dominated Convergence Theorem implies that $I_{mn}$ varies continuously as we rotate the region and it is clear that $I_{mn}$ changes sign if we rotate $\Omega$ by $\pi$ and $m+n$ is odd.  Therefore, the Intermediate Value Theorem tells us that we can choose the orientation of $\Omega$ so that $I_{21}=0$.  This assumption will simplify our formulas in the following theorem.

\begin{theorem}\label{deg2}
Let $\Omega$ be a simply connected Jordan region in $\bbC$ that whose centroid is equal to $0$ and which satisfies $I_{21}=0$.  Then the projection of $\bar{z}$ onto the polynomials of degree at most $2$ in the Bergman space is given by
\[
\frac{c_{0,2}}{c_{1,1}}z+\frac{c_{0,0}c_{1,1}}{c_{0,0}c_{1,1}c_{2,2}-c_{0,0}|c_{2,1}|^2-c_{1,1}|c_{2,0}|^2}\left(c_{0,3}-\frac{c_{1,2}c_{0,2}}{c_{1,1}}\right)\left(z^2-\frac{c_{2,1}}{c_{1,1}}z-\frac{c_{2,0}}{c_{0,0}}\right)
\]
Consequently
\begin{align*}
&\rho_2(\Omega)=4\left[\frac{I_{20}I_{02}-I_{11}^2}{I_{20}+I_{02}}-\frac{I_{00}[\left(I_{02}\left(I_{30}-I_{12}\right)-2I_{20}I_{12}+I_{11}I_{03}\right)^{2}+\left(I_{20}I_{03}+I_{11}(I_{30}+I_{12})\right)^{2}]}{(I_{20}+I_{02})^2\left[I_{00}\left(I_{40}+2I_{22}+I_{04}-\frac{(I_{12}+I_{30})^2+I_{03}^2}{I_{20}+I_{02}}\right)-(I_{20}-I_{02})^2-4I_{11}^2\right]}\right]
\end{align*}
\end{theorem}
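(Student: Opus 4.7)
The plan is to mirror the proof of Theorem \ref{deg1}. I would build the orthonormal Bergman basis $\{p_0, p_1, p_2\}$ via Gram--Schmidt on $\{1, z, z^2\}$, assemble $Q_2(z) = \sum_{n=0}^{2} \langle \bar{w}, p_n(w) \rangle\, p_n(z)$ and $\rho_2(\Omega) = c_{1,1} - \sum_{n=0}^{2} |\langle \bar{w}, p_n \rangle|^2$, and finally translate the complex moments $c_{i,j}$ into the real moments $I_{mn}$.

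From the proof of Theorem \ref{deg1} we already have $p_0 = c_{0,0}^{-1/2}$ and $p_1(z) = z/\sqrt{c_{1,1}}$. The centroid assumption $c_{1,0} = c_{0,1} = 0$ makes the Gram--Schmidt step for $z^2$ immediate: the monic orthogonal polynomial is
\[
P_2(z) = z^2 - \frac{c_{2,1}}{c_{1,1}}\, z - \frac{c_{2,0}}{c_{0,0}},
\]
and its squared norm $\langle P_2, P_2 \rangle = \langle P_2, z^2 \rangle$ (the two lower-order terms vanish by orthogonality) simplifies, after using $c_{i,j} = \overline{c_{j,i}}$, to $c_{2,2} - |c_{2,1}|^2/c_{1,1} - |c_{2,0}|^2/c_{0,0}$. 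A direct computation, again using $c_{0,1} = 0$, gives
\[
\langle \bar{w}, P_2(w) \rangle = c_{0,3} - \frac{c_{1,2}\, c_{0,2}}{c_{1,1}}.
\]
Dividing by $\|P_2\|^2$ and clearing the common factor $c_{0,0}c_{1,1}$ produces exactly the scalar multiplying $P_2(z)$ in the theorem's display, while the $p_0$ and $p_1$ contributions to $Q_2$ yield $0$ and $(c_{0,2}/c_{1,1})z$ as in Theorem \ref{deg1}. This establishes the first claim.

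For the explicit expression for $\rho_2$, I would substitute
\[
c_{0,0} = I_{00},\ c_{1,1} = I_{20}+I_{02},\ c_{2,0} = (I_{20}-I_{02}) + 2iI_{11},\ c_{2,2} = I_{40}+2I_{22}+I_{04},
\]
\[
c_{2,1} = (I_{30}+I_{12}) + iI_{03},\qquad c_{0,3} = (I_{30}-3I_{12}) + iI_{03},
\]
where the hypothesis $I_{21} = 0$ is precisely what removes the would-be $I_{21}$ contributions to $\Imag(c_{2,1})$ and $\Imag(c_{0,3})$. Writing $\rho_2 = \rho_1 - |\langle \bar{w}, p_2 \rangle|^2$ and multiplying the $p_2$ term by $c_{0,0}/c_{0,0}$ makes $I_{00}\|P_2\|^2$ appear in the bracket, producing the theorem's denominator. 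Splitting $c_{0,3} - c_{1,2}c_{0,2}/c_{1,1}$ into its real and imaginary parts and using the identities $I_{03}(I_{20}+I_{02}) + I_{03}(I_{20}-I_{02}) = 2I_{03}I_{20}$ and $(I_{30}-3I_{12})(I_{20}+I_{02}) - (I_{30}+I_{12})(I_{20}-I_{02}) = 2[I_{02}(I_{30}-I_{12}) - 2I_{20}I_{12}]$ collapses them into the two bracketed summands visible in the numerator.

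The main obstacle is purely bookkeeping rather than any substantive difficulty. The $I_{21} = 0$ hypothesis is essential: without it, both $\Imag(c_{2,1})$ and $\Imag(c_{0,3})$ would carry extra $I_{21}$ terms that would prevent the numerator from collapsing into a sum of two perfect squares, and the elegant closed form displayed in the theorem is precisely the reward for the preparatory rotation of $\Omega$ justified just before the statement.
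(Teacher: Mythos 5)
Your proposal is correct and follows essentially the same route as the paper's proof: Gram--Schmidt to get $p_0,p_1,p_2$ (with $P_2(z)=z^2-\frac{c_{2,1}}{c_{1,1}}z-\frac{c_{2,0}}{c_{0,0}}$ and $\|P_2\|^2=c_{2,2}-|c_{2,1}|^2/c_{1,1}-|c_{2,0}|^2/c_{0,0}$), the projection coefficient $\langle\bar w,P_2\rangle=c_{0,3}-c_{1,2}c_{0,2}/c_{1,1}$, and then the moment substitutions $c_{2,1}=(I_{30}+I_{12})+iI_{03}$, $c_{0,3}=(I_{30}-3I_{12})+iI_{03}$ under $I_{21}=0$. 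In fact you supply more of the final algebraic reduction (which checks out) than the paper does, since it dismisses that step as ``algebraic manipulation.''
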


\begin{proof}
We calculate
\[
p_2(z;\Omega)=\sqrt{\frac{c_{0,0}c_{1,1}}{c_{1,1}c_{2,2}-|c_{2,1}|^2-c_{1,1}|c_{2,0}|^2}}\left(z^2-\frac{c_{2,1}}{c_{1,1}}z-\frac{c_{2,0}}{c_{0,0}}\right).
\]
We then have
\begin{align*}
Q_2(z)&=\langle\bar{z},p_1\rangle p_1+\langle\bar{z},p_2\rangle p_2\\
&=\frac{c_{0,2}}{\sqrt{c_{1,1}}}p_1(z;\Omega)+\sqrt{\frac{c_{0,0}c_{1,1}}{c_{0,0}c_{1,1}c_{2,2}-c_{0,0}|c_{2,1}|^2-c_{1,1}|c_{2,0}|^2}}\left(c_{0,3}-\frac{c_{1,2}c_{0,2}}{c_{1,1}}\right)p_2(z;\Omega)
\end{align*}
Therefore,
\[
\rho_2(\Omega)=c_{1,1}-\frac{|c_{0,2}|^2}{c_{1,1}}-\left(\frac{c_{0,0}c_{1,1}}{c_{0,0}c_{1,1}c_{2,2}-c_{0,0}|c_{2,1}|^2-c_{1,1}|c_{2,0}|^2}\right)\left|c_{0,3}-\frac{c_{1,2}c_{0,2}}{c_{1,1}}\right|^2
\]
The rest of the proof follows from algebraic manipulation and the fact that $I_{21}=0$.
\end{proof}

Let us consider some specific examples to which we can apply this method and in particular Theorems \ref{deg1} and \ref{deg2}.

\subsubsection{The Rectangle}  Consider the rectangle $(-a/2,a/2)\times(-b/2,b/2)$.  In this case, the moments of area $I_{mn}$ are especially easy to calculate, as are the area moments $c_{i,j}$.  If we apply Theorem \ref{deg1} to this rectangle and combine the result with \cite[page 99]{PS}, then we get
\[
\frac{a^3b^3}{4(a^2+b^2)}\leq\rho(\Omega)\leq\frac{a^3b^3}{3(a^2+b^2)}.
\]
Theorem \ref{deg2} gives the same estimate because the region $\Omega$ in this case is symmetric with respect to the real axis and the imaginary axis, so $I_{mn}=0$ whenever $m$ or $n$ is odd.  Recall the formula on \cite[page 108]{PS}:
\begin{align}\label{rectangle}
\rho(\Omega)&=\frac{256a^3b^3}{\pi^6}\sum_{k,j=0}^{\infty}\frac{1}{(2j+1)^2(2k+1)^2\left((2j+1)^2a^2+(2k+1)^2b^2\right)}\\
\nonumber&\qquad\quad\approx\frac{0.266281a^3b^3}{a^2+b^2}+\frac{256a^3b^3}{\pi^6}\sum_{{k,j=0}\atop{k+j>0}}^{\infty}\frac{1}{(2j+1)^2(2k+1)^2\left((2j+1)^2a^2+(2k+1)^2b^2\right)}
\end{align}
(see also \cite[Section 38]{Sok}).  We see that by using the best approximation to $\bar{z}$ among polynomials with degree at most $1$, we nearly recover the first term in this series.

Since we know the scaling properties of torsional rigidity, we may assume without loss of generality that the area of the rectangle is $1$, so $b=1/a$.  Let us denote the corresponding rectangle by $\Omega(a)$.  We employed the moments approach to find an upper bound on the torsional rigidity of $\Omega(a)$ using polynomials of degree at most $12$.  Define the function $R(a):(0,\infty)\rightarrow(0,\infty)$ by using the sum (\ref{rectangle}) with $b=1/a$ and letting $j$ and $k$ run from $0$ to $85$.  It is clear that $R(a)$ is an underestimate for $\rho(\Omega(a))$, while we have already observed that $\rho_{12}(\Omega(a))$ is an overestimate for $\rho(\Omega(a))$.  Figure 1 shows a plot of $\rho_{12}(\Omega(a))/R(a)-1$ for values of $a$ between $0$ and $10$.  Notice that the error is smaller than one half of one percent.%The resulting upper bound is expressed as a function of $a$ in the figure.

\begin{figure}[h!]\label{rectpic}
  \centering
    \includegraphics[width=0.55\textwidth]{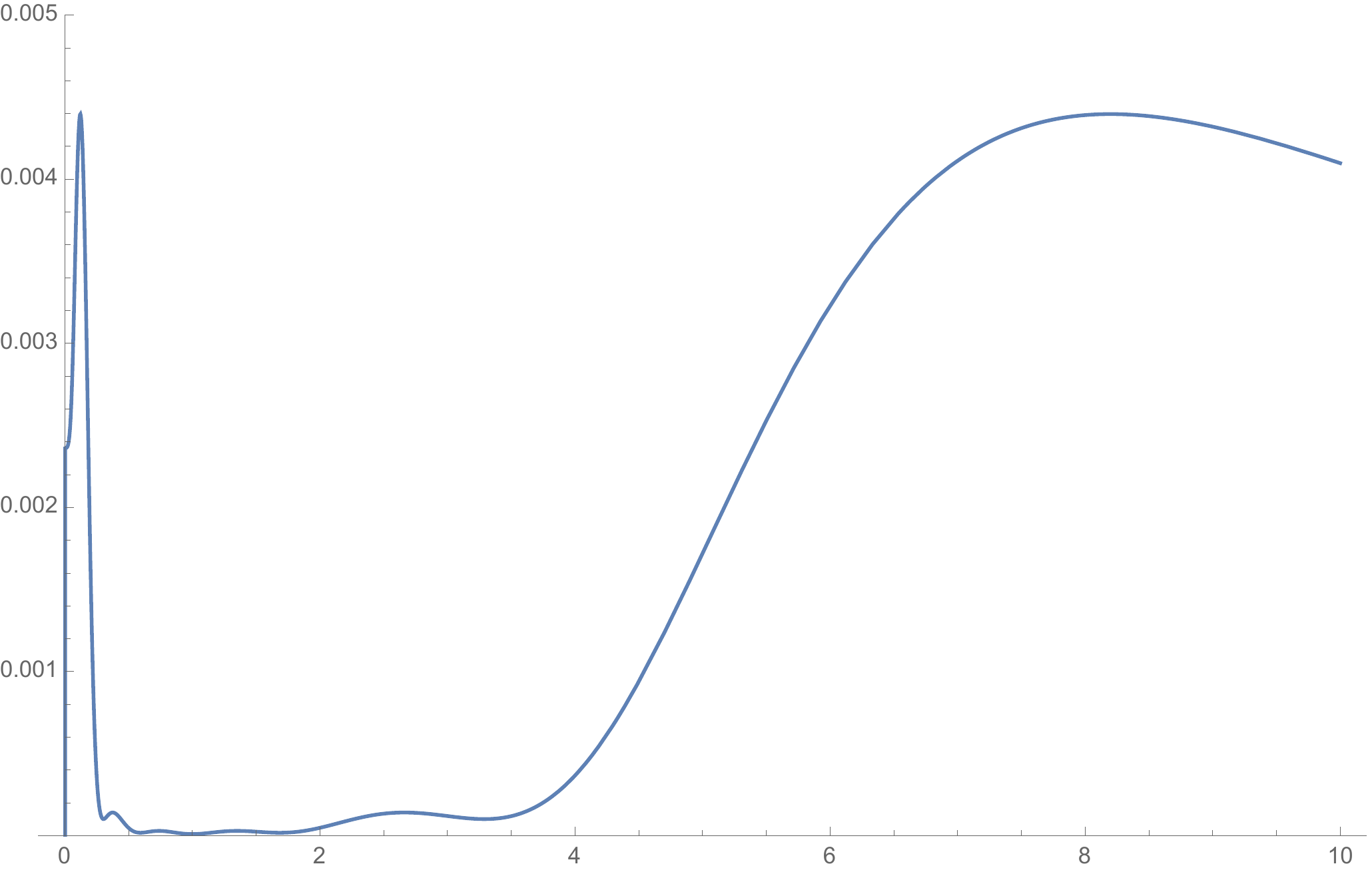}
  \caption{A plot of $\frac{\rho_{12}(\Omega(a))}{R(a)}-1$ as a function of $a$ when $\Omega(a)$ is the rectangle with side lengths $a$ and $1/a$.}
\end{figure}

%We can see that the graph is peaked very sharply around $a=1$, and indeed it is known that the quadrilateral with area $1$ having largest torsional rigidity is the square of area $1$ (see \cite{Pol}).  The actual torsional rigidity of this square can be calculated using the sum (\ref{rectangle}) and is approximately equal to $0.140577015...$.  The value of $\rho_{12}(\Omega)$ when $a=1$ is approximately $0.14057836...$, which is an error of approximately one thousandth of a percent!

\subsubsection{The House}  Here we consider a pentagon with vertices at $(-1,0), (1,0), (0,1-a), (1,a)$, and $(-1,a)$.  This shape has area $1$ and has a reflective symmetry around the $y$-axis (see Figure 2).
\begin{figure}[h!]\label{housepic}
  \centering
    \includegraphics[width=0.55\textwidth]{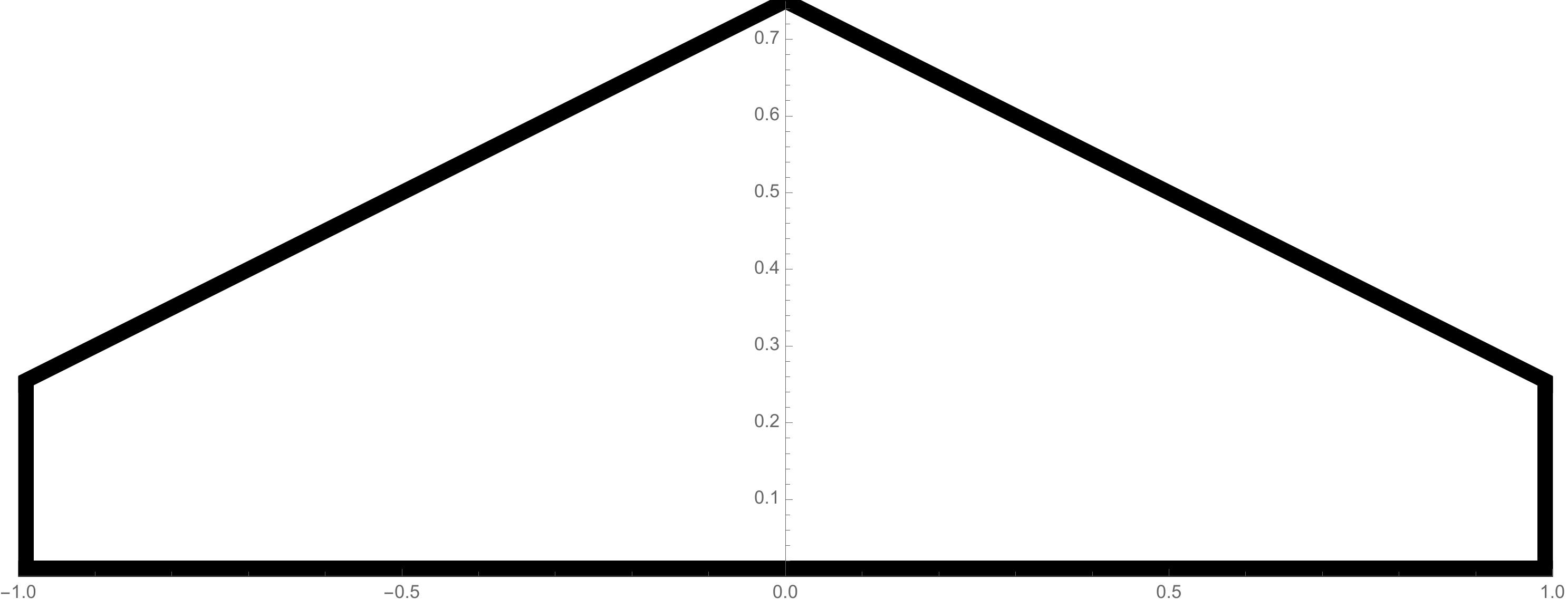}
  \caption{The house region when $a=1/4$.}
\end{figure}
When $a=0$, this shape is an isosceles triangle with base $2$ and altitude $1$ and when $a=1/2$, this shape is a rectangle with side lengths $2$ and $1/2$.  To employ the moments approach we discussed above, we need the following calculation.

\begin{prop}\label{housemom}
The moments of the house region described above with parameter $a\in[0,1/2]$ are given by
\begin{align*}
c_{n,m}&=\sum_{{0\leq j\leq n}\atop{0\leq k\leq m}}\bigg[\binom{n}{j}\binom{m}{k}(-1)^{m-k}i^{n+m-j-k}(1+(-1)^{j+k})\times\\
&\qquad\left(\frac{(1-a)^{m+n+1-j-k}\,_2F_1(j+k-m-n-1,j+k+1;j+k+2;2-\frac{1}{1-a})}{(j+k+1)(m+n+1-j-k)}\right)\bigg]
\end{align*}
\end{prop}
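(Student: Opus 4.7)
The plan is to compute $c_{n,m} = \int_\Omega z^n\bar{z}^m\, dA$ by reducing it to real moments $\int_\Omega x^p y^q\, dA$ via double binomial expansion, exploiting the reflection symmetry $x\mapsto -x$ of the house to kill the odd-$x$ moments, and then reducing the surviving one-dimensional integral to Euler's integral representation of the Gauss hypergeometric function.

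First, I would write $z=x+iy$, $\bar z=x-iy$ and expand
\begin{equation*}
z^n\bar{z}^m \;=\; \sum_{j=0}^n\sum_{k=0}^m\binom{n}{j}\binom{m}{k}(-1)^{m-k}i^{n+m-j-k}\,x^{j+k}\,y^{n+m-j-k}.
\end{equation*}
Integrating term by term leaves the real moments $\int_\Omega x^{j+k}y^{n+m-j-k}\, dA$. The next step is to parameterize the house as
\begin{equation*}
\Omega=\{(x,y):-1\le x\le 1,\ 0\le y\le 1-a-(1-2a)|x|\},
\end{equation*}
so that reflection symmetry in $x$ shows these integrals vanish when $j+k$ is odd. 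This is what produces the factor $(1+(-1)^{j+k})$: it equals $2$ for even $j+k$ and $0$ otherwise, so a single formula captures both parities.

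Now with $p:=j+k$ and $q:=n+m-j-k$, the inner $y$-integration is immediate:
\begin{equation*}
\int_\Omega x^p y^q\, dA \;=\; \frac{1+(-1)^p}{q+1}\int_0^1 x^p\bigl(1-a-(1-2a)x\bigr)^{q+1}\, dx.
\end{equation*}
Next, pull out $(1-a)^{q+1}$ from the second factor and set $z_0:=\tfrac{1-2a}{1-a}=2-\tfrac{1}{1-a}$; the remaining integral becomes $(1-a)^{q+1}\int_0^1 x^p(1-z_0 x)^{q+1}\, dx$. This is precisely Euler's integral for $_2F_1$: applying
\begin{equation*}
{}_2F_1(\alpha,\beta;\gamma;z)\;=\;\frac{\Gamma(\gamma)}{\Gamma(\beta)\Gamma(\gamma-\beta)}\int_0^1 t^{\beta-1}(1-t)^{\gamma-\beta-1}(1-zt)^{-\alpha}\,dt
\end{equation*}
with $\beta=p+1$, $\gamma=p+2$, $\alpha=-q-1$ (so that $\gamma-\beta-1=0$ and $\Gamma(\gamma)/\Gamma(\beta)=p+1$) identifies the integral as $\tfrac{1}{p+1}\,{}_2F_1(-q-1,p+1;p+2;z_0)$.

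Assembling everything gives
\begin{equation*}
\int_\Omega x^{j+k}y^{n+m-j-k}\, dA \;=\; \frac{(1+(-1)^{j+k})(1-a)^{n+m+1-j-k}}{(j+k+1)(n+m+1-j-k)}\,{}_2F_1\!\left(j+k-n-m-1,\,j+k+1;\,j+k+2;\,2-\tfrac{1}{1-a}\right),
\end{equation*}
and multiplying by $\binom{n}{j}\binom{m}{k}(-1)^{m-k}i^{n+m-j-k}$ and summing in $j,k$ reproduces the stated expression for $c_{n,m}$. The only nonroutine step is recognizing the integral $\int_0^1 x^p(1-z_0x)^{q+1}\,dx$ as Euler's hypergeometric integral; with that in hand, there is no real obstacle, and the rest is just careful tracking of indices and signs through the binomial expansion.
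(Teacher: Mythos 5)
Your proposal is correct and follows essentially the same route as the paper: binomial expansion of $z^n\bar z^m$ into real moments, reduction of the $y$-integration over the house to a one-dimensional integral, and identification of $\int_0^1 x^{p}(1-z_0x)^{q+1}\,dx$ via Euler's integral representation of $_2F_1$. The only cosmetic difference is that you fold the two halves $x\in[-1,0]$ and $x\in[0,1]$ together at the outset using the reflection symmetry (producing the factor $1+(-1)^{j+k}$ directly), whereas the paper writes both integrals and evaluates one, noting the other is analogous.
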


\begin{proof}
It is clear that
\begin{align*}
c_{n,m}&=\sum_{{0\leq j\leq n}\atop{0\leq k\leq m}}\bigg[\binom{n}{j}\binom{m}{k}(-1)^{m-k}i^{n+m-j-k}\times\\
&\left(\int_{-1}^0\int_{0}^{1-a+(1-2a)x}x^{j+k}y^{n+m-j-k}dydx+\int_{0}^{1}\int_{0}^{1-a-(1-2a)x}x^{j+k}y^{n+m-j-k}dydx\right)\bigg]
\end{align*}
so we need only evaluate the integrals.  We will show the calculation for evaluating the second integral and note that the first one is evaluated similarly.

Clearly this calculation reduces to evaluating
\[
\int_{0}^{1}x^{j+k}(1-a-(1-2a)x)^{n+m+1-j-k}dx=(1-a)^{n+m+1-j-k}\int_{0}^{1}x^{j+k}\left(1-\frac{1-2a}{1-a}x\right)^{n+m+1-j-k}dx.
\]
By Euler's integral formula for the Gauss Hypergoemetric function (see \cite[Section 1.6]{KLS}), we conclude that this is equal to
\[
\frac{(1-a)^{n+m+1-j-k}}{j+k+1}\,_2F_1\left(j+k-n-m-1,j+k+1;j+k+2;\frac{1-2a}{1-a}\right)
\]
as desired.
\end{proof}

Using this formula for the moments, we have employed the moment method to estimate the torsional rigidity of the house region using polynomials of degree at most $7$ (that is, we calculate $\rho_7(\Omega)$) and plotted the results in the figure.  The figure also shows a lower bound on torsional rigidity, which was obtained using (\ref{rhodef}) and the functions
\begin{align*}
u_1(x,y)&=y(2-y)(1-x^8)(y^2-(1-a-(1-2a)x)^2)(y^2-(1-a+(1-2a)x)^2)\\
u_2(x,y)&=y(1-x^4)(y^2-(1-a-(1-2a)x)^2)(y^2-(1-a+(1-2a)x)^2)\\
u_3(x,y)&=
\begin{cases}
y(1-y)(1-x^4)(y^2-(1-a-(1-2a)x)^2)  \qquad & \mbox{ if } x\leq0\\
y(1-y)(1-x^4)(y^2-(1-a+(1-2a)x)^2) & \mbox{ if } x\geq0.
\end{cases}
\end{align*}
Although the function $u_3$ may not be differentiable along the line $x=0$, it is clear that this function can be approximated uniformly by smooth functions in such a way that the lower bound obtained by $u_3$ is still valid.

Among these three trial functions, the function $u_1$ yields the best estimate for values of $a$ less than some critical value (which is approximately $1/3$), the function $u_2$ yields the best estimate when $a$ is between this critical value and a second critical value (which is approximately $2/5$), and the function $u_3$ yields the best estimate between this second critical value and $1/2$.  The best possible lower bound one can obtain from these calculations results from taking the maximum of the estimates given by $u_1$, $u_2$, and $u_3$, which is plotted in Figure 3.  The plot clearly suggests that the torsional rigidity of the house shaped region decreases as $a$ increases in $[0,1/2]$, which would have been difficult to predict.  Furthermore, the decrease seems to be very nearly linear in the parameter $a$.

\begin{figure}[h!]\label{houseplot}
  \centering
    \includegraphics[width=0.55\textwidth]{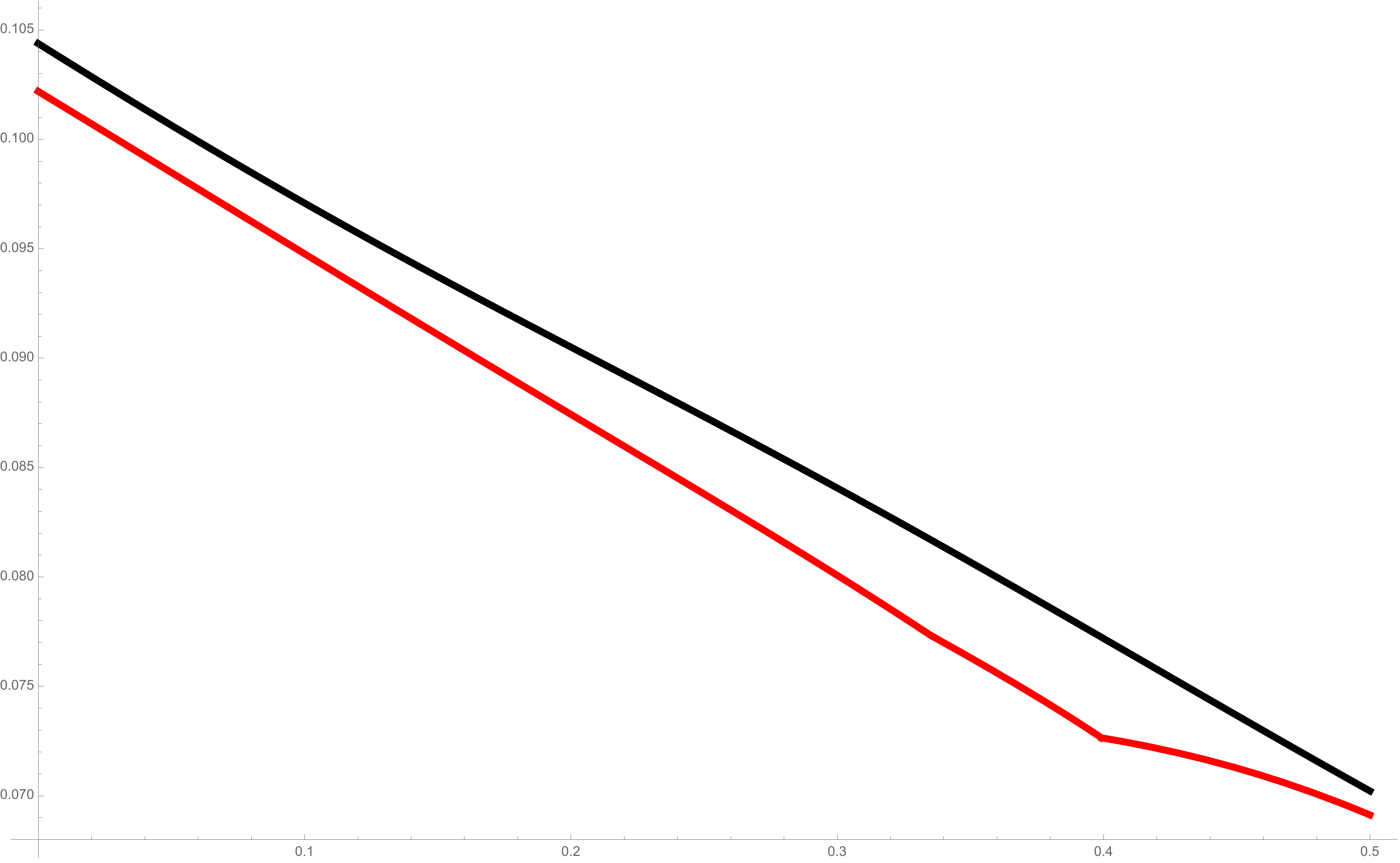}
  \caption{An upper and lower bound on $\rho(\Omega)$ as a function of $a$ when $\Omega$ is the house.}
\end{figure}

The upper bound that we obtain when $a=1/2$ is $0.0703208...$.  We know from (\ref{rectangle}) that the torsional rigidity of the rectangle with side lengths $1/2$ and $2$ is approximately $0.0702032...$, which indicates that the plot of the upper bound is close to the actual value, at least near that endpoint.  When $a=0$, this region is an isosceles right triangle with legs of length $\sqrt{2}$.  This is a special case of the example that we will study next.

\subsubsection{The Right Triangle}If we apply Theorem \ref{deg1} to the isosceles right triangle with vertices $(-\sqrt{2}/3,0)$, $(1/\sqrt{2}-\sqrt{2}/3,1/\sqrt{2})$, and $(1/\sqrt{2}-\sqrt{2}/3,-1/\sqrt{2})$ (which has centroid zero), then we get
\[
\rho(\Omega)\leq\rho_1(\Omega)=\frac{1}{24}.
\]
If we apply Theorem \ref{deg2} to the same triangle (which has $I_{21}=0$), we get
\[
\rho(\Omega)\leq\rho_2(\Omega)=\frac{11}{408}\approx0.0269608...
\]
Recall the formula given on \cite[page 108]{PS}, which tells us that
\begin{align}\label{triangle}
\rho(\Omega)=\frac{2^{10}}{\pi^6}\sum_{m,n=1}^{\infty}\frac{m^2}{(2n-1)^2[4m^2-(2n-1)^2][16m^4-(2n-1)^4]}\approx0.0260897...
\end{align}

We can apply the moments approach to the general right triangle with area $1$ having vertices at $(0,0)$, $(a,0)$, and $(a,2/a)$ for some $a>0$.  The symmetry of other known extremal shapes leads us to suspect that in this family of right triangles, the one with maximal torsional rigidity is the isosceles right triangle.  We formalize this in the following conjecture.

\begin{conj}\label{tri}
Among all right triangles with area $A>0$, the one with largest torsional rigidity is the isosceles right triangle with area $A$.
\end{conj}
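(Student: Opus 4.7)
The plan is to exploit the moments approach of Section \ref{moment} together with the scaling identity $\rho(r\Omega) = |r|^4 \rho(\Omega)$. The latter lets us fix $A = 1$ and parametrize the family of unit-area right triangles by $a > 0$ via $\Omega_a$ with vertices $(0,0)$, $(a,0)$, $(a, 2/a)$. The triangles $\Omega_a$ and $\Omega_{2/a}$ are congruent, so $\rho(\Omega_a) = \rho(\Omega_{2/a})$, reducing the problem to $a \geq \sqrt{2}$, with the isosceles case precisely at $a = \sqrt{2}$.

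The key tool is the upper bound $\rho(\Omega_a) \leq \rho_N(\Omega_a)$. Direct integration over $\Omega_a$ yields all area moments $c_{i,j}(a)$ as polynomial expressions in $a$ and $1/a$ (one finds $c_{i,j}(a) = \frac{2 a^{i+j}}{i+j+2} \int_0^1 (1 + 2it/a^2)^i (1 - 2it/a^2)^j\, dt$, which expands into a polynomial in $1/a^2$). Feeding these into the determinantal formulas of Section \ref{moment} expresses $\rho_N(\Omega_a)$ as an explicit rational function of $a^2$ for each fixed $N$. At $a = \sqrt{2}$ the exact torsional rigidity is given by the rapidly convergent series (\ref{triangle}) (rescaled by a factor of $4$ to account for the area normalization), yielding a sharp value $L = \rho(\Omega_{\sqrt{2}})$.

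The conjecture then reduces to proving the pointwise inequality $\rho_N(\Omega_a) \leq L$ for all $a \geq \sqrt{2}$, with equality only at $a = \sqrt{2}$. As $a \to \infty$ the triangle degenerates to a segment, and a crude comparison (enclose $\Omega_a$ in the rectangle $[0,a] \times [0, 2/a]$ and invoke inclusion monotonicity of $\rho$ together with the series (\ref{rectangle})) produces an explicit $a_{\max}$ beyond which $\rho(\Omega_a) < L$ is automatic. This reduces the problem to the compact interval $[\sqrt{2}, a_{\max}]$ on which $\rho_N$ is a smooth algebraic function of $a$.

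The hard part is making the pointwise inequality rigorous over this whole interval. Reasonable strategies include (i) choosing $N$ large enough that the algebraic expression $\rho_N(\Omega_a) - L$ can be certified non-positive on $[\sqrt{2}, a_{\max}]$ by interval arithmetic or by factoring out $(a - \sqrt{2})^2$ and analyzing the quotient, (ii) bounding $|\partial_a \rho_N(\Omega_a)|$ uniformly and verifying the inequality on a sufficiently fine mesh of $a$-values, or (iii) showing that $a = \sqrt{2}$ is the unique critical point of $\rho_N(\Omega_a)$ on $[\sqrt{2}, a_{\max}]$ for large $N$, coupled with a second-derivative check there. The numerical verification to within half a percent recorded in the paper appears to follow a variant of (i)--(ii); the principal obstruction to a closed-form proof is the absence of a tractable description of the Bergman polynomials or of a conformal bijection between $\bbD$ and a general right triangle, which is precisely the gap that motivates the moment-based approach in the first place.
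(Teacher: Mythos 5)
The statement you are addressing is labeled a conjecture in the paper, and the paper does not prove it: it offers only numerical evidence, computing the upper bounds $\rho_{10}(\Omega_a)$ by the moments approach of Section \ref{moment} and observing that $\rho_{10}(\Omega_a)<\rho(\Omega_{\sqrt{2}})$ outside the window $a\in[1.408131,1.4203223]$, so that any maximizer must lie within roughly half a percent of $\sqrt{2}$. Your proposal follows the same route (fix area $1$ by scaling, exploit the congruence $\Omega_a\cong\Omega_{2/a}$, compare the algebraic upper bound $\rho_N(\Omega_a)$ against the exact value $L=\rho(\Omega_{\sqrt{2}})$ from (\ref{triangle}), and dispose of large $a$ by enclosing $\Omega_a$ in a thin rectangle and using (\ref{rectangle})), and you are candid that the decisive step is left open. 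That honesty is appropriate, but the specific reduction you propose cannot be completed.

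The gap is this: the inequality you want to certify, namely $\rho_N(\Omega_a)\le L$ for all $a\ge\sqrt{2}$ with equality only at $a=\sqrt{2}$, is \emph{false} for every finite $N$. The Bergman projection of $\bar{z}$ onto $A^2(\Omega_{\sqrt{2}})$ is not a polynomial, so $\rho_N(\Omega_{\sqrt{2}})>\rho(\Omega_{\sqrt{2}})=L$ strictly for every $N$ (the paper's own computations illustrate this: $\rho_2\approx0.02696$ versus $\rho\approx0.02609$ for the area-$1/2$ isosceles right triangle), and by continuity of $a\mapsto\rho_N(\Omega_a)$ the strict inequality $\rho_N(\Omega_a)>L$ persists on an open neighborhood of $a=\sqrt{2}$. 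This neighborhood is exactly the paper's window $[1.408,1.420]$; it shrinks as $N$ grows (exponentially fast once one knows Theorem \ref{expconv}-type asymptotics hold), but it never vanishes. Consequently interval arithmetic, mesh refinement with a derivative bound, and factoring out $(a-\sqrt{2})^2$ from $\rho_N(\Omega_a)-L$ all fail precisely where the conjecture lives; and your option (iii) of locating the critical points of $\rho_N$ establishes a property of the approximant, not of $\rho$, since nothing forces the critical points of $\rho_N(\Omega_a)$ to coincide with those of $\rho(\Omega_a)$. What is genuinely missing is a local argument at $a=\sqrt{2}$: for instance, matching upper bounds $\rho_N(\Omega_a)$ with \emph{lower} bounds from trial functions in (\ref{rhodef}) whose gap is $o\left((a-\sqrt{2})^2\right)$ uniformly near $\sqrt{2}$, or a Hadamard-type variational formula showing $a\mapsto\rho(\Omega_a)$ is strictly concave through its symmetric critical point. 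Without such an ingredient the method yields, as in the paper, only a verification up to a small but nonzero error.
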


Here we provide some strong numerical evidence supporting this conjecture and verify its accuracy to several decimal places.  Clearly it suffices to only consider right triangles with area $1$, which we parametrize as above and refer to the corresponding right triangle as $\Omega_a$.  It is easy to verify that the area moments of this triangle are
\[
c_{n,m}=\sum_{{0\leq j\leq n}\atop{0\leq k\leq m}}\binom{n}{j}\binom{m}{k}(-1)^{m-k}i^{n+m-j-k}\frac{2^{1+m+n-j-k}a^{2j+2k-m-n}}{(m+n+2)(m+n+1-j-k)}
\]
For any $n\in\bbN$, we can find the optimal polynomial approximant of degree at most $n$ to $\bar{z}$ in $L^2(\Omega_a,dA)$ using the moments approach.  We performed this calculation to find $\rho_{10}(\Omega_a)$ and the results are plotted in Figure 4.

\begin{figure}[h!]\label{triplot}
  \centering
    \includegraphics[width=0.55\textwidth]{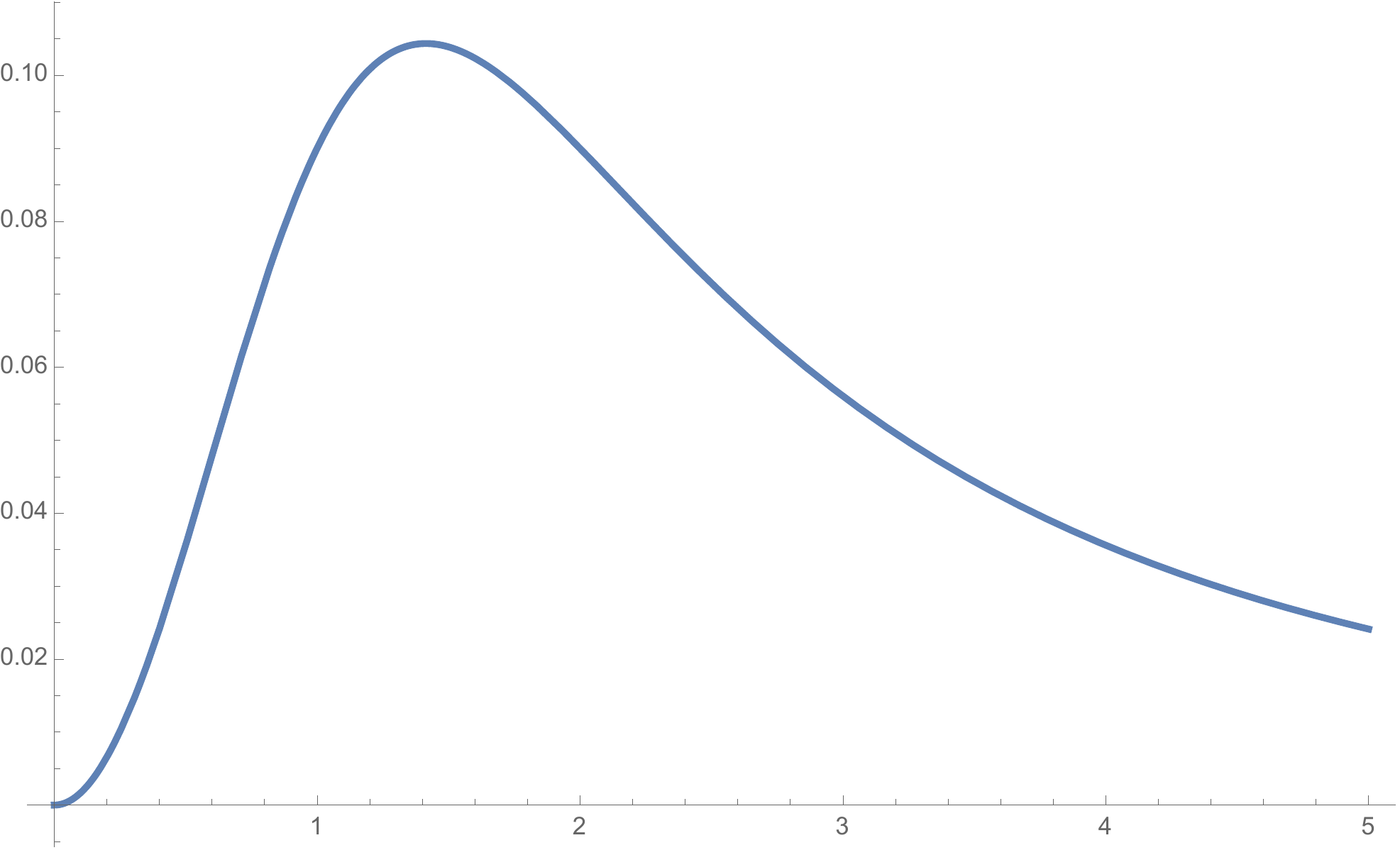}
  \caption{The approximate torsional rigidity $\rho_{10}(\Omega_a)$ as a function of $a$.}
\end{figure}

To use this information to support the conjecture, we again recall that we know the value of $\rho(\Omega_{\sqrt{2}})\approx(\sqrt{2})^4(0.0260897...)=0.1043586...$.  This means that $\max_{a>0}\{\rho(\Omega_a)\}$ can only be obtained by a value of $a$ for which $\rho_{10}(\Omega_a)\geq\rho(\Omega_{\sqrt{2}})$.  Looking closely at our graph, we see that this only happens for values of $a$ between $1.408131$ and $1.4203223$ (see Figure 5).  
\begin{figure}[h!]\label{triplot2}
  \centering
    \includegraphics[width=0.55\textwidth]{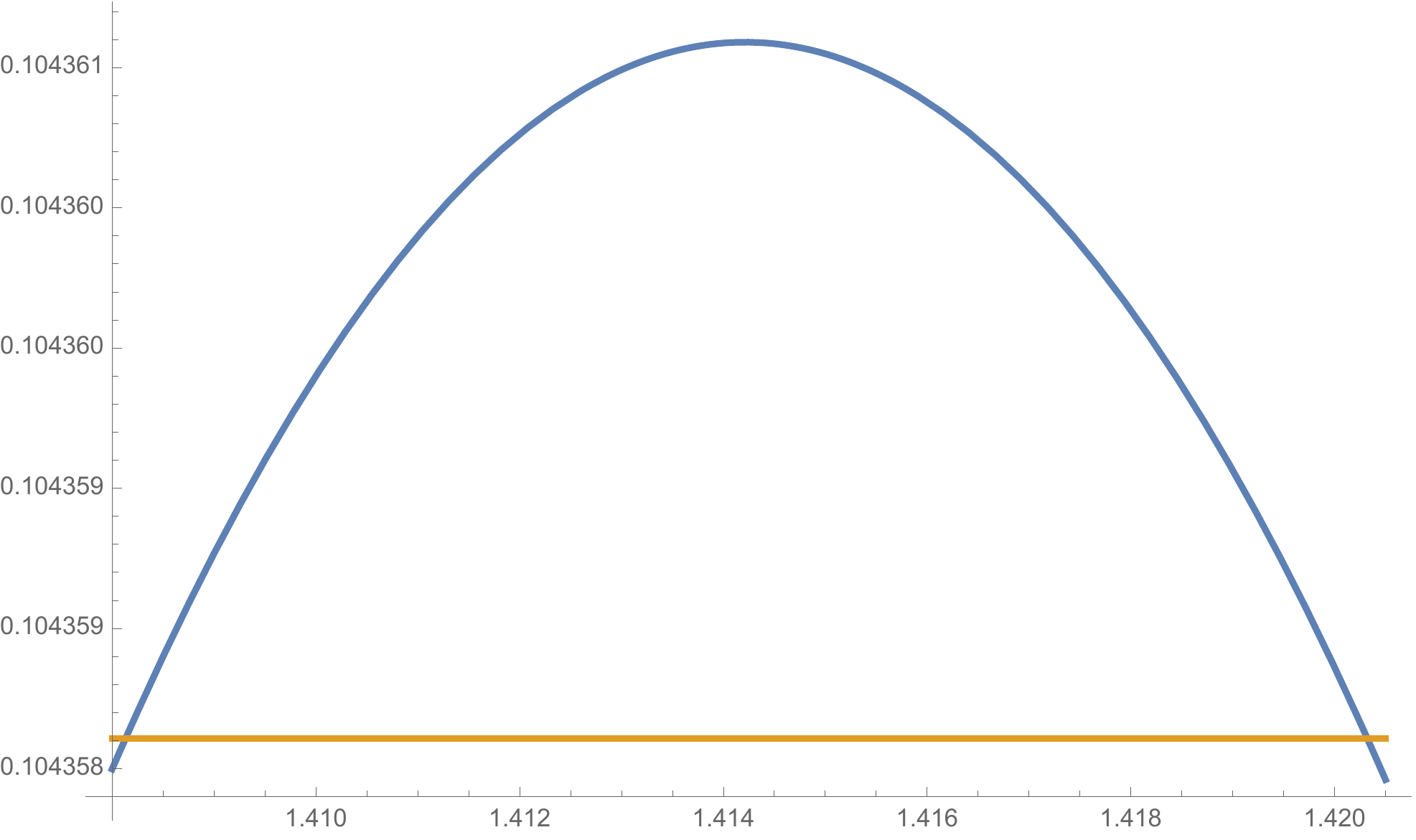}
  \caption{The approximate torsional rigidity $\rho_{10}(\Omega_a)$ as a function of $a$ for $a$ very close to $\sqrt{2}$.}
\end{figure}
Since $\sqrt{2}\approx1.41421356...$, we have numerically verified Conjecture \ref{tri} to an accuracy of approximately $0.0061$, which is slightly less than one half of one percent of $\sqrt{2}$.

%\medskip

\subsection{Rate of Convergence}  We have seen that the sequence $\{\rho_N(\Omega)\}_{N=1}^{\infty}$ gives a collection of upper bounds for $\rho(\Omega)$ that converges to $\rho(\Omega)$ as $\bnri$.  Let us say what we can about the rate of convergence.  To do so, one needs to know something about the asymptotics of the Bergman orthogonal polynomials $\{p_n\}_{n=0}^{\infty}$.    For this purpose, we consider the case in which $\Omega$ has analytic boundary and recall the notation and information from Section \ref{bergpoly}, specifically equation (\ref{pasy}).  If we define
\[
\eta^{\sharp}(z):=\overline{\eta\left(\frac{1}{\bar{z}}\right)},
\]
then $\eta^{\sharp}$ is univalent on the punctured disk of radius $1/r$ centered at $0$ and has a simple pole at $0$.  We then conclude that there is some $R\in(r,1)$ such that as $\nri$ it holds that
\begin{align*}
\langle w,p_n(w)\rangle&=\int_{\Omega}wp_n(w)dA(w)=\frac{1}{2i}\int_{\partial\Omega}|z|^2p_n(z)dz=\frac{\sqrt{n+1}}{2i\sqrt{\pi}}\int_{\partial\Omega}|z|^2\gamma(z)^n\gamma'(z)dz+\mco(R^n)\\
&=\frac{\sqrt{n+1}}{2i\sqrt{\pi}}\int_{\partial\bbD}|\eta(z)|^2z^ndz+\mco(R^n)=\frac{\sqrt{n+1}}{2i\sqrt{\pi}}\int_{\partial\bbD}\eta(z)\eta^{\sharp}(z)z^ndz+\mco(R^n)\\
&=\frac{\sqrt{n+1}}{2i\sqrt{\pi}}\int_{\{z:|z|=s\}}\eta(z)\eta^{\sharp}(z)z^ndz+\mco(R^n),
\end{align*}
where $s\in(r,R)$.  Observe that we used Green's Theorem in the second equality in this calculation.  This last integral clearly decays exponentially as $\nri$, so we have proven that the approximations $\rho_n(\Omega)$ converge to $\rho(\Omega)$ exponentially quickly in this case as $\nri$.  We state this result precisely in the following theorem.

\begin{theorem}\label{expconv}
If $\Omega$ is a Jordan region with analytic boundary, then
\[
\limsup_{\nri}\left(\rho_n(\Omega)-\rho(\Omega)\right)^{1/n}<1.
\]
\end{theorem}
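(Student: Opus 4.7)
The plan is to recognize $\rho_N(\Omega) - \rho(\Omega)$ as a tail of a Parseval sum and then bound it using the boundary-integral representation for $\langle\bar w,p_n\rangle$ essentially developed in the discussion immediately preceding the theorem.

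Because Farrell's theorem guarantees that $\{p_n(z;\Omega)\}_{n\geq 0}$ is an orthonormal basis for the Bergman space of $\Omega$, Parseval's identity yields $\|Q\|^2 = \sum_{n=0}^{\infty}|\langle\bar w,p_n\rangle|^2$. Combining this with the Pythagorean relation $\rho(\Omega) = \|\bar z\|^2 - \|Q\|^2 = c_{1,1} - \|Q\|^2$ and the definition of $\rho_N(\Omega)$, I obtain the identity
\[
\rho_N(\Omega) - \rho(\Omega) = \sum_{n=N+1}^{\infty} |\langle\bar w,p_n\rangle|^2,
\]
reducing the theorem to proving exponential decay of $|\langle\bar w,p_n\rangle|$ in $n$.

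For this decay I would invoke the chain of equalities displayed in the paragraph preceding the theorem: noting that $|\langle\bar w,p_n\rangle|$ equals the modulus of $\int_\Omega w\,p_n(w)\,dA(w)$, Green's theorem together with the Carleman asymptotic (\ref{pasy}) and the substitution $w=\eta(z)$ rewrites this as
\[
\frac{\sqrt{n+1}}{2i\sqrt{\pi}}\int_{|z|=s}\eta(z)\eta^\sharp(z)\,z^n\,dz + \mco(R^n)
\]
for some $s\in(r,R)\subset(r,1)$. The crucial point is that $\eta\,\eta^\sharp$ is analytic on the \emph{genuine two-sided} annulus $\{r<|z|<1/r\}$, since $\eta$ extends univalently to $|z|>r$ and $\eta^\sharp(z)=\overline{\eta(1/\bar z)}$ is correspondingly analytic on $0<|z|<1/r$. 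The contour integral therefore computes $2\pi i$ times the $(-n-1)$-st Laurent coefficient of $\eta\eta^\sharp$, and Cauchy's estimate on the inner circle of the annulus bounds this coefficient by $C\rho_0^{n+1}$ for any $\rho_0\in(r,1)$. Choosing such a $\rho_0$ large enough to dominate the $\mco(R^n)$ error produces $|\langle\bar w,p_n\rangle|^2 = \mco(n\rho_0^{2n})$.

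Summing the geometric tail then yields $\rho_N(\Omega)-\rho(\Omega) = \mco(N\rho_0^{2N})$, and extracting the $N$-th root gives $\limsup_{\nri}(\rho_n-\rho)^{1/n}\leq\rho_0^2<1$, which is the claim. The only real obstacle is justifying the Cauchy estimate, which rests on confirming the analyticity of $\eta\eta^\sharp$ on a proper two-sided annulus rather than a mere punctured disk; however, this is immediate from the analytic-boundary hypothesis on $\Omega$ together with the definition of $\eta^\sharp$, so no further input is required.
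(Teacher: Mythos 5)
Your proposal is correct and follows essentially the same route as the paper: the paper's proof is precisely the chain of equalities in the paragraph preceding the theorem (Green's theorem, Carleman's asymptotics \eqref{pasy}, the substitution $w=\eta(z)$, and the contour deformation to $|z|=s$ with $s\in(r,1)$), combined with the implicit Parseval identity $\rho_N(\Omega)-\rho(\Omega)=\sum_{n>N}|\langle\bar w,p_n\rangle|^2$. You merely make explicit two steps the paper leaves tacit --- the Parseval tail identity and the Cauchy estimate on the annulus where $\eta\eta^{\sharp}$ is analytic --- so no substantive difference remains.
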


%If $\partial\Omega$ is sufficiently smooth, then one can can find results on the asymptotics of the corresponding Bergman polynomials in \cite[Theorems 1.1 $\&$ 1.2]{Suetin} that would undoubtedly help with determining the rate of convergence of $\rho_n(\Omega)$ to $\rho(\Omega)$ as $\nri$ for more general regions.
%ly, suppose $\Omega$ is any Jordan region and the formula (\ref{pasy}) holds with $s=1$ and $\omega_n:=\sqrt{n+1}\|\varepsilon_n\|_{L^{\infty}(\partial\Omega)}\rightarrow0$ as $\nri$.  Then the above calculations show that the rate of decay (up to a multiplicative constant) of $\{\langle w,p_n(w)\rangle\}_{n=0}^{\infty}$ as $\nri$ is at least as fast as the rate of decay of the sequence $\{\omega_n\}_{n=1}^{\infty}$ and the rate of decay of the sequence of Fourier coefficients
%\[
%\sqrt{n}\int_{\partial\bbD}|\eta(\eitheta)|^2e^{in\theta}d\theta.
%\]
%It is shown in \cite[Theorems 1.1 $\&$ 1.2]{Suetin} that if $\partial\Omega$ is sufficiently smooth, then the sequence $\{\omega_n\}_{n=1}^{\infty}$ decays to zero as $\nri$ and estimates on the rate of decay are provided.

\medskip

Now we turn our attention to a different approach to calculating torsional rigidity, which is more explicit but requires more specific information about the region $\Omega$.

\subsection{The Poisson Kernel Approach}\label{pkapproach}

Throughout this section we will let $\varphi:\Omega\rightarrow\bbD$ be a conformal bijection and let $\psi:\bbD\rightarrow\Omega$ be the inverse map.  We will give an explicit formula for the stress function of $\Omega$ and the Bergman projection of $\bar{z}$ in $A^2(\Omega)$, both of which require knowledge of the conformal map $\psi$.  The ideas of this method are not new and were presented in \cite[Section 134]{Mush}.  However, in \cite{Mush} there was no mention of the Bergman projection of $\bar{z}$.  Therefore, we present here a unified exposition that includes this additional information.

Define the function $F:\bbD\rightarrow\{z:\Real[z]>0\}$ by
\[
F(z):=\int_0^{2\pi}\frac{\eitheta+z}{\eitheta-z}\,\frac{|\psi(\eitheta)|^2}{2}\frac{d\theta}{2\pi},\qquad\qquad |z|<1.
\]
%The function $F$ relates to the calculations in Section \ref{repker} by the formula
%\begin{align*}
%F(z)&=F(0)+\frac{1}{\pi}\sum_{k=1}^{\infty}\alpha_{k-1}(\Omega;\varphi)z^{k}
%\end{align*}

\begin{theorem}\label{fform}
The Bergman Projection of $\bar{z}$ and stress function for $\Omega$ are given by
 \begin{equation}\label{projform1}
Q(z)=F'(\varphi(z))\varphi'(z),\qquad\qquad \nu(z)=\Real[F(\varphi(z))]-\frac{|z|^2}{2}
\end{equation}
respectively.
\end{theorem}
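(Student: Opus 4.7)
The plan is to prove the two formulas in \eqref{projform1} separately, both by transporting the relevant problem from $\Omega$ to the unit disk via $\psi$.

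For the stress function, I would appeal to the observation made in the introduction that $\nu(z;\Omega)=u(z)-|z|^{2}/2$, where $u$ solves the Dirichlet problem on $\Omega$ with boundary data $|z|^{2}/2$. The composition $u\circ\psi$ is then harmonic on $\bbD$ with boundary values $|\psi(e^{i\theta})|^{2}/2$, and the Schwarz/Herglotz representation constructs exactly this harmonic extension as $\Real F$. Pulling back via $\varphi$ yields $u(z)=\Real[F(\varphi(z))]$, and hence the claimed formula for $\nu$.

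The Bergman projection formula is the substantive part of the theorem. First, I would reduce it to a statement on the disk via the unitary isomorphism $U\colon A^{2}(\Omega)\to A^{2}(\bbD)$ defined by $Uh=(h\circ\psi)\psi'$. A direct change of variables $z=\psi(w)$ in the orthogonality relation $\int_{\Omega}(\bar z-Q(z))\overline{h(z)}\,dA(z)=0$ shows that $Q$ is the Bergman projection of $\bar z$ in $A^{2}(\Omega)$ if and only if $w\mapsto Q(\psi(w))\psi'(w)$ is the Bergman projection of $\overline{\psi(w)}\,\psi'(w)$ in $A^{2}(\bbD)$. Since $\varphi'(z)\psi'(\varphi(z))=1$, the target formula $Q(z)=F'(\varphi(z))\varphi'(z)$ is equivalent to the disk statement that the Bergman projection of $\overline{\psi}\,\psi'$ in $A^{2}(\bbD)$ equals $F'$.

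To prove the latter, the key identity is
\begin{align*}
\overline{\psi(w)}\,\psi'(w)=\partial_{w}|\psi(w)|^{2},
\end{align*}
which, together with the analyticity of any test function $H\in A^{2}(\bbD)$, lets me write both $\overline{\psi}\,\psi'\overline{H}$ and $F'\overline{H}$ as $\partial_{w}$-derivatives of $|\psi|^{2}\overline{H}$ and $F\overline{H}$ respectively. Applying the complex form of Green's theorem on the subdisks $|w|\le r$ and then letting $r\uparrow 1$, the orthogonality defect collapses to a boundary integral:
\begin{align*}
\int_{\bbD}\bigl(\overline{\psi(w)}\,\psi'(w)-F'(w)\bigr)\overline{H(w)}\,dA(w)=-\frac{1}{2i}\int_{\partial\bbD}\bigl(|\psi(w)|^{2}-F(w)\bigr)\overline{H(w)}\,d\bar w.
\end{align*}
On $\partial\bbD$, the defining property of the Schwarz integral gives $|\psi|^{2}=2\Real F=F+\bar F$, so $|\psi|^{2}-F=\bar F$, and parameterizing $w=e^{i\theta}$ collapses the remaining integral to a constant multiple of $\int_{0}^{2\pi}\overline{F(e^{i\theta})H(e^{i\theta})}\,e^{-i\theta}\,d\theta$. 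Since $FH$ is analytic in $\bbD$, its boundary values carry only non-negative Fourier modes, so $\overline{FH}$ only non-positive modes; multiplication by $e^{-i\theta}$ shifts these to strictly negative modes, all of which integrate to zero over $[0,2\pi]$. This yields $\overline{\psi}\,\psi'-F'\perp A^{2}(\bbD)$, and together with $F'\in A^{2}(\bbD)$ it identifies $F'$ as the required Bergman projection.

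The main obstacle is the rigorous justification of the Green's-theorem step at the boundary, which requires enough regularity of $\psi$ on $\partial\bbD$ for the boundary integrals and the Hardy-type manipulations to be legitimate. This can be handled by working on $|w|\le r$ for $r<1$ and then letting $r\uparrow 1$, using standard Hardy-space facts under the mild smoothness hypotheses that also ensure convergence of the Poisson/Schwarz integral defining $F$.
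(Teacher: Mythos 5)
Your proposal is correct, and its first half (the stress function via the Schwarz--Herglotz solution of the Dirichlet problem pulled back by $\varphi$) is essentially what the paper does. Where you diverge is the Bergman projection: the paper establishes the boundary identity $F(\varphi(z))+\overline{F(\varphi(z))}=|z|^2$ on $\partial\Omega$ and then simply \emph{cites} \cite[Theorem 1]{FK}, which says precisely that if $f$ is analytic in $\Omega$ with $\Real f=|z|^2/2$ on $\partial\Omega$ then $f'$ is the Bergman projection of $\bar z$; applied to $f=F\circ\varphi$ this gives $Q=F'(\varphi)\varphi'$ immediately. You instead re-prove that cited fact from scratch, transplanting to the disk via the unitary $h\mapsto(h\circ\psi)\psi'$ and running the complex Green's theorem argument with the identity $|\psi|^2-F=\bar F$ on $\partial\bbD$. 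Your route is self-contained and makes visible exactly where the boundary identity enters, at the cost of the regularity bookkeeping you flag at the end. Two points there deserve explicit attention. First, a generic $H\in A^2(\bbD)$ has no boundary values, so the limit $r\uparrow1$ of the boundary integral is not justified for arbitrary test functions; you should run the argument for polynomial $H$ (where $F\in H^2$, since $\Real F$ is bounded, makes the Fourier-mode argument legitimate) and invoke density of polynomials in $A^2(\bbD)$. Second, to conclude that $F'$ \emph{is} the projection rather than merely that the defect is orthogonal, you need $F'\in A^2(\bbD)$; this does hold, e.g.\ because $\int_{\bbD}|F'|^2dA$ equals the Dirichlet integral of $\Real F$, which by conformal invariance equals that of the harmonic extension $u$ of $|z|^2/2$ over $\Omega$, finite since $u=\nu+|z|^2/2$ with $\int_\Omega|\nabla\nu|^2dA=\rho(\Omega)<\infty$ --- but it is not automatic and should be stated. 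With those two repairs your argument is a complete and arguably more transparent proof than the paper's citation-based one.
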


\begin{proof}
We know from \cite[Section 1.3]{OPUC1} that $\Real[F(\eitheta)]=\frac{1}{2}|\psi(\eitheta)|^2$ Lebesgue almost everywhere.  Continuity of $\psi$ up to and including the boundary of $\Omega$ implies that this equality actually holds everywhere on $\partial\Omega$.  Therefore, $F(\varphi(z))+\overline{F(\varphi(z))}=|z|^2$ for every $z\in\partial\Omega$.  It follows from \cite[Theorem 1]{FK} that $Q$ has the desired form.  From this, it easily follows that $\nu$ as given in the statement of the Theorem solves the boundary value problem (\ref{bproblem}) and so it must be the stress function.
\end{proof}

It follows from Theorem \ref{fform} that the torsional rigidity of $\Omega$ is equal to
\begin{align}
\nonumber\rho(\Omega)=\int_{\Omega}|\bar{z}-F'(\varphi(z))\varphi'(z)|^2dA(z)&=\int_\Omega|z|^2dA(z)-\int_\Omega|F'(\varphi(z))\varphi'(z)|^2dA(z)\\
\label{rhofform}&=\int_{\bbD}|\psi(z)\psi'(z)|^2dA(z)-\int_\bbD|F'(z)|^2dA(z).%\\
%\nonumber&=\frac{1}{4}\int_{\bbD}\left|(\psi(z)^2)'\right|^2dA(z)-\int_\bbD|F'(z)|^2dA(z)\\
%&=\pi\|\psi\psi'\|^2_{\mcb}-\pi\|F\|_{\mcd}^2,
\end{align}
%where $\|\cdot\|_{\mcb}$ denotes the Bergman norm on the unit disk and $\|\cdot\|_{\mcd}$ denotes the Dirichlet norm on the unit disk
(compare with \cite[Equation 9]{DW}).

Let $\Omega_R$ be the image of $\{z:|z|<R\}$ under the map $\psi$.  Without further hypotheses, one can only define $\Omega_R$ for $R\leq1$.  However, if $\Omega$ has analytic boundary, then we can define $\Omega_R$ for some values of $R$ in $(1,\infty)$.  Let $R_{\Omega}$ be the largest value of $R\in[1,\infty]$ for which $\psi$ extends univalently to the disk centered at $0$ of radius $R$.

\begin{prop}\label{anacont}
The Bergman projection of $\bar{z}$ is analytic in $\Omega_{R_\Omega}$.
\end{prop}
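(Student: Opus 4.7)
The plan is to exploit the explicit formula $Q(z)=F'(\varphi(z))\varphi'(z)$ from Theorem \ref{fform} together with Schwarz reflection: since $\varphi=\psi^{-1}$ automatically extends analytically to $\Omega_{R_\Omega}$ (as the inverse of the univalent extension of $\psi$ to the disk of radius $R_\Omega$), it suffices to show that $F$ extends analytically from $\bbD$ to the disk $\{|w|<R_\Omega\}$.

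First I would introduce the Schwarz reflection $\psi^{\sharp}(w):=\overline{\psi(1/\bar{w})}$, which is analytic in $\{|w|>1/R_\Omega\}$ and satisfies $\psi^{\sharp}(\eitheta)=\overline{\psi(\eitheta)}$ on the unit circle. Consequently, $\psi(w)\psi^{\sharp}(w)$ is analytic in the annulus $\{1/R_\Omega<|w|<R_\Omega\}$ and its boundary values on $|w|=1$ coincide with $|\psi(\eitheta)|^2$. This is exactly the density appearing in the Herglotz integral that defines $F$.

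Next I would expand $\psi(w)\psi^{\sharp}(w)=\sum_{n\in\bbZ}a_n w^n$ as a Laurent series convergent in that annulus. The non-negative-index part $H_+(w):=\sum_{n\geq 0}a_n w^n$ has radius of convergence at least $R_\Omega$ and hence is analytic on $\{|w|<R_\Omega\}$. Expanding the Herglotz kernel as $\frac{\eitheta+w}{\eitheta-w}=1+2\sum_{k\geq 1}w^k e^{-ik\theta}$ and integrating term-by-term (justified because the Fourier series of $|\psi(\eitheta)|^2$ converges uniformly on the unit circle, being the restriction of a function analytic in an annulus around it) gives
\[
F(w)=\frac{a_0}{2}+\sum_{k=1}^{\infty}a_k w^k = H_+(w)-\frac{a_0}{2}, \qquad |w|<1.
\]
The right-hand side is analytic on the disk of radius $R_\Omega$, so this identity provides the desired analytic continuation of $F$.

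Combining these steps, $Q(z)=F'(\varphi(z))\varphi'(z)$ is a product of compositions of functions analytic on $\Omega_{R_\Omega}$, and the proposition follows. The only mildly delicate point is the term-by-term integration in the Herglotz expansion, but analyticity of $\psi\psi^{\sharp}$ across the unit circle makes this immediate; the structural content of the argument is really the reflection identity $|\psi(\eitheta)|^2=\psi(\eitheta)\psi^{\sharp}(\eitheta)$, which converts a Poisson-type integral into the extraction of the analytic part of an already-analytic function.
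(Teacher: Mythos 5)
Your argument is correct, and its overall skeleton is the same as the paper's: extend $\varphi$ to $\Omega_{R_\Omega}$ as the inverse of the univalent extension of $\psi$, extend $F$ to $\{w:|w|<R_\Omega\}$, and conclude from the formula $Q=F'(\varphi)\varphi'$ of Theorem \ref{fform}. The difference lies entirely in how the continuation of $F$ is obtained: the paper simply cites \cite[Theorem 7.1.2]{OPUC1} for the fact that the Carath\'eodory function of a weight that is analytic in an annulus around the unit circle continues analytically to the larger disk, whereas you prove this from scratch via the reflection $\psi^{\sharp}(w)=\overline{\psi(1/\bar{w})}$, the identity $|\psi(\eitheta)|^2=\psi(\eitheta)\psi^{\sharp}(\eitheta)$, and extraction of the nonnegative part of the Laurent series of $\psi\psi^{\sharp}$ on the annulus $\{1/R_\Omega<|w|<R_\Omega\}$. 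Your computation $F(w)=\tfrac{a_0}{2}+\sum_{k\ge1}a_kw^k$ is exactly the relation $F=h_0+2\sum_{j\ge1}h_jz^j$ that the paper uses later in its examples (the dented disk and Neumann's oval), so your proof has the virtue of being self-contained and of making explicit the mechanism that the worked examples exploit; the cost is only length. The term-by-term integration you flag is indeed harmless, since the Laurent series converges uniformly on the unit circle. One small point worth stating explicitly: the nonnegative-index part of a Laurent series convergent on $\{1/R_\Omega<|w|<R_\Omega\}$ converges on all of $\{|w|<R_\Omega\}$ because the coefficient bounds $|a_n|\le Cs^{-n}$ hold for every $s<R_\Omega$; you assert this correctly but it is the one place where the radius $R_\Omega$ actually enters.
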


\begin{proof}
We know that $\varphi$ is analytic in $\Omega_{R_{\Omega}}$ and we know from \cite[Theorem 7.1.2]{OPUC1} that $F$ is analytic in $\{z:|z|<R_{\Omega}\}$, so the result follows from Theorem \ref{fform}.
\end{proof}

Proposition \ref{anacont} shows that if $\psi$ can be univalently continued to a disk of radius larger than $1$, then the Bergman projection of $\bar{z}$ can be analytically continued outside of $\Omega$.  The following example shows that the converse is false.

\medskip

\noindent\textbf{Example: Equilateral Triangle}  Consider the equilateral triangle $T$ with vertices at $(1,0)$, $(-1/2,-\sqrt{3}/2)$, and $(-1/2,\sqrt{3}/2)$.  It is clear that the conformal bijection $\psi:\bbD\rightarrow T$ has three points on the boundary where the behavior is locally like $(z-z_0)^{1/3}$, and hence the radius of convergence of the Maclaurin series for $\psi$ is $1$.  It is easy to check that
\[
u(x,y):=\frac{2}{3}\left(x+\frac{1}{2}\right)\left(x-(1+\sqrt{3}y)\right)\left(x-(1-\sqrt{3}y)\right)=2\Real\left[\frac{z^3}{3}+\frac{1}{6}\right]-|z|^2
\]
(compare with \cite[Section 37]{Sok}).  Clearly $u(x,y)=0$ for all $x+iy\in\partial T$, so $\frac{z^3}{3}+\frac{1}{6}$ is an analytic function in $T$ whose real part is equal to $|z|^2/2$ everywhere on $\partial T$.  It follows from \cite[Theorem 1]{FK} that $z^2$ is the Bergman projection of $\bar{z}$ to the Bergman space of $T$.  This is an entire function, which shows that the converse to Proposition \ref{anacont} is false.

%Similarly, consider the square $S$ with vertices at $(1,0)$, $(0,1)$, $(-1,0)$, and $(0,-1)$.  Again the bijection $\psi$ has singularities on the unit circle.  It is easy to check that
%\[
%u(x,y):=\frac{1}{2}\left(x-(y-1)\right)\left(x-(y+1)\right)\left(x-(1-y)\right)\left(x-(-y-1)\right)=2\Real\left[\frac{z^4+1}{4}\right]-|z|^2.
%\]
%Clearly $u(x,y)=0$ for all $x+iy\in\partial S$, so $\frac{z^4+1}{4}$ is an analytic function in $S$ whose real part is equal to $|z|^2/2$ everywhere on $\partial S$.  It follows from \cite[Theorem 1]{FK} that $z^{3}$ is the Bergman projection of $\bar{z}$ to the Bergman space of $S$.

%Similarly, consider the regular hexagon $H$ with vertices at the sixth roots of unity.  Defining $u(x,y)$ as above to be the product of the linear functions that vanish precisely on the lines through the edges $H$, one can verify through a lengthy calculation that
%\[
%u(x,y)=(y^3-3x^2y)^2-\frac{27}{4}(x^2+y^2)^2+\frac{27}{2}(x^2+y^2)-\frac{27}{4}.
%\]
%After setting $z=x+iy$ and some simplification, this shows
%\[
%\left(\Real\left[(-iz)^3\right]\right)^2=\frac{27}{4}(|z|^2-1)^2,\qquad z\in\partial H.
%\]
%This shows
%\[
%2\Real\left[\frac{1}{3\sqrt{3}}(-iz)^3+\frac{1}{2}\right]=|z|^2,\qquad z\in\partial H.
%\]
%Therefore $\frac{(-iz)^3}{3\sqrt{3}}+\frac{1}{2}$ is an analytic function in $H$ whose real part is equal to $|z|^2/2$ everywhere on $\partial H$.  It follows from \cite[Theorem 1]{FK} that $iz^2/\sqrt{3}$ is the Bergman projection of $\bar{z}$ to the Bergman space of $H$.

\medskip

Even if the conformal bijection $\psi$ is known explicitly, the calculations required to obtain the function $F$ can be difficult to complete or complete quickly.  Our next example shows that when $\psi$ is the reciprocal of a polynomial there is a faster way to obtain $F$ that uses the theory of orthogonal polynomials on the unit circle.

\medskip

\noindent\textbf{Example: Rational Conformal Map}  Consider the case when $\psi(z)=\sqrt{2}/p(z)$, where $p(z)$ is a degree $n$ polynomial with all of its zeros outside the closed unit disk and normalized so that
\[
\int_0^{2\pi}\frac{|\psi(\eitheta)|^2}{2}\frac{d\theta}{2\pi}=\int_0^{2\pi}\frac{1}{|p(\eitheta)|^2}\frac{d\theta}{2\pi}=1.
\]
In this case, one can avoid much calculation and take a shortcut to obtain the function $F$.

Consider $d\mu_p:=|p(\eitheta)|^{-2}\frac{d\theta}{2\pi}$, which is a probability measure on $[0,2\pi]$ (or equivalently, on the unit circle).  It is easy to show that the degree $n$ orthonormal polynomial for this measure is $p^*(z)=z^n\overline{p(1/\bar{z})}$ (notice that $p^*$ has degree $n$ because $p(0)\neq0$). While this is a standard fact from the theory of orthogonal polynomials on the unit circle, we present here a simple proof.  Indeed, let $W$ be any monic polynomial of degree $n$ and let $\kappa_n$ be the leading coefficient of $p^*$.  Since the integrand is subharmonic, we have
\[
\int_{0}^{2\pi}\left|\frac{W(\eitheta)}{p(\eitheta)}\right|^2\frac{d\theta}{2\pi}=\int_{0}^{2\pi}\left|\frac{W(\eitheta)}{p^*(\eitheta)}\right|^2\frac{d\theta}{2\pi}\geq\frac{W(\infty)^2}{p^*(\infty)^2}=\frac{1}{\kappa_n^2},
\]
with equality if and only if the integrand in the second integral is constant.  Thus, the choice of $W$ that minimizes this integral is $p^*/\kappa_n$, so this must be the monic orthogonal polynomial of degree $n$ for $\mu_p$ and hence $p^*$ is the negree $n$ orthonormal polynomial for $\mu_p$ as desired.

Define $\Phi_n:=p^*/\kappa_n$. Recall our discussion from Section \ref{uc}, which implies we can use $\Phi_n$ to determine the monic degree $m$ orthogonal polynomials for $\mu_p$ for all $m\leq n$.  Let us call this collection $\{\Phi_m\}_{m=0}^n$.  By evaluating these polynomials at zero, we can determine the first $n$ Verblunsky coefficients for $\mu_p$; call them $\{\alpha_0,\ldots,\alpha_{n-1}\}$.  One can then iterate the Szeg\H{o} recursion algorithm (see Section \ref{uc}) using the collection $\{-\alpha_0,\ldots,-\alpha_{n-1}\}$ to obtain the monic degree $n$ second kind polynomial for $\mu_p$; call it $\Psi_n$.  From \cite[Theorem 3.2.4]{OPUC1}, it follows that
\[
F(z)=\frac{\overline{\Psi_n(1/\bar{z})}}{\overline{\Phi_n(1/\bar{z})}},\qquad\qquad |z|<1.
\]
One can then plug this into formula (\ref{rhofform}) to calculate the torsional rigidity of $\Omega$.

As a concrete example, let us consider the case
\[
\psi(z)=\frac{9\sqrt{2}}{\sqrt{11}(z+2)^3}
\]
(see Figure 6).
\begin{figure}[h!]\label{ratopic}
  \centering
    \includegraphics[width=0.35\textwidth]{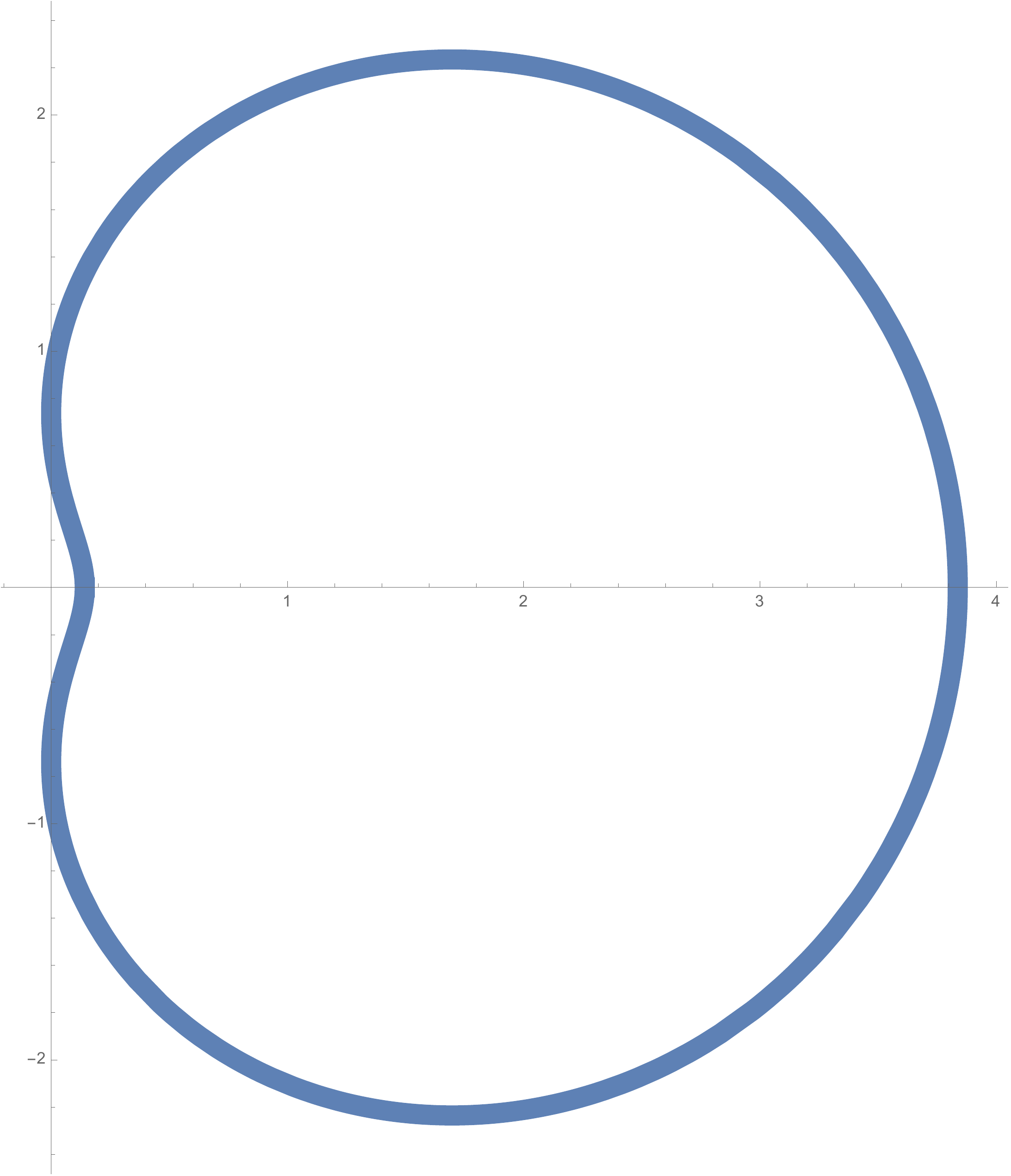}
  \caption{The boundary of the region $\psi(\bbD)$ when $\psi(z)=\frac{9\sqrt{2}}{\sqrt{11}(z+2)^3}$.}
\end{figure}
In this case, $p(z)=\sqrt{11}(z+2)^3/9$, so $p^*(z)=\frac{\sqrt{11}}{9}(1+2z)^3$ and hence $\Phi_3(z)=(z+1/2)^3$.  We then calculate
\[
-\bar{\alpha}_2=\Phi_3(0)=1/8.
\]
If we plug this into \cite[Equation 1.5.40]{OPUC1}, we get
\[
\Phi_2(z)=\frac{64}{63z}((z+1/2)^3-(1+z/2)^3/8)=z^2+\frac{10}{7}z+\frac{4}{7}.
\]
Repeating this procedure shows
\[
-\bar{\alpha}_1=\Phi_2(0)=4/7
\]
and hence
\[
\Phi_1(z)=\frac{49}{33z}\left(z^2+\frac{10}{7}z+\frac{4}{7}-\frac{4}{7}\left(\frac{4}{7}z^2+\frac{10}{7}z+1\right)\right)=z+\frac{10}{11}.
\]
We conclude that $\{\alpha_0,\alpha_1,\alpha_2\}=\{-10/11,-4/7,-1/8\}$.  Now, if we perform the Szeg\H{o} recursion with the set $\{10/11,4/7,1/8\}$, then we get
\[
\Psi_3(z)=z^3-\frac{7}{22}z^2-\frac{23}{24}z-\frac{1}{8}.
\]
Combining this with our formula for $\Phi_3(z)$, we find
\[
F(z)=\frac{24-\frac{84}{11}z-23z^2-3z^3}{3(z+2)^3}.
\]
Clearly
\[
\varphi(z)=\left(\frac{9\sqrt{2}}{\sqrt{11}z}\right)^{1/3}-2,
\]
so Theorem \ref{fform} allows us to write down explicit formulas for $Q(z)$ and $\nu(z)$.  We can also calculate the torsional rigidity using the formula (\ref{rhofform}).  Indeed, we have
\begin{align*}
\rho(\Omega)=\frac{1}{4}\int_{\bbD}\left|\frac{972}{11(z+2)^7}\right|^2dA(z)-\int_{\bbD}\left|\frac{55z^2-844z-960}{33(z+2)^4}\right|^2dA(z),
\end{align*}
and each of these integral can be easily evaluated using Taylor series.

\section{Further Examples}\label{egs}

%One common method of obtaining lower bounds on torsional rigidity is to use the fact mentioned in Section \ref{intro} that if $\Omega_1\subseteq\Omega_2$, then $\rho(\Omega_1)\leq\rho(\Omega_2)$.  To make this approach as robust as possible, it is important to have a large class of domains for which one can compute the torsional rigidity, stress function, and Bergman projection of $\bar{z}$ explicitly.  Using the new techniques mentioned in Section \ref{methods}, we now present the necessary calculations for some interesting examples. 
Here we present two detailed calculations.  The first considers a ``dented disk" region and the second considers Neumann's oval.  In both cases, we can use the method described in Section \ref{pkapproach} to find exact expressions for the torsional rigidity, stress function, and Bergman projection of $\bar{z}$.

\subsection{The Dented Disk}\label{dent}

In this example, we use the Poisson Kernel approach to calculate the torsional rigidity of the region that is the image of the unit disk under the conformal bijection
\[
\psi(z)=z+\frac{a}{z-b}
\]
subject to the constraints that
\begin{itemize}
\item[(i)] $a\neq0$, $|b|>1$
\item[(ii)] $|b\pm\sqrt{a}|>1$
\item[(iii)] $\displaystyle{\left|b+\frac{a}{e^{it}-b}\right|>1}$ for all $t\in\bbR$
\end{itemize}
The second of these conditions assures us that $\psi'\neq0$ in $\bbD$.  The third condition assures us that $\psi$ is injective on $\bbD$.  Therefore, $\psi$ is a conformal bijection of $\bbD$ with its range (see Figure 7 for an example).

\begin{figure}[h!]\label{dentpic}
  \centering
    \includegraphics[width=0.35\textwidth]{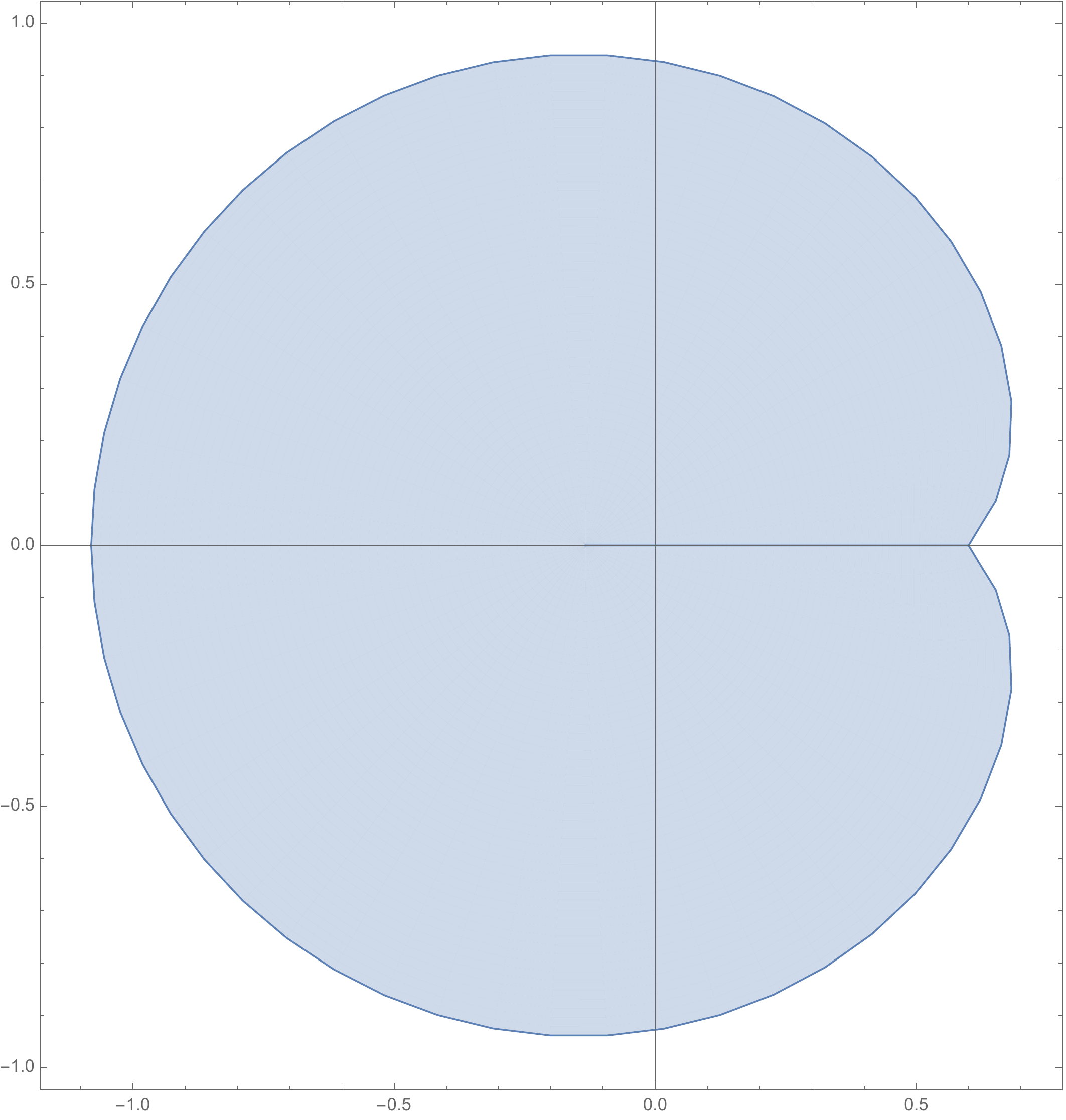}
  \caption{The region $\psi(\bbD)$ when $a=1/5$ and $b=3/2$.}
\end{figure}

By Taylor expanding the integrand in powers $\eitheta$, one can calculate
\[
h_j:=\int_0^{2\pi}e^{-ij\theta}\frac{|\psi(\eitheta)|^2}{2}\frac{d\theta}{2\pi}=
\begin{cases}
\frac{1}{2}\left(1-2\Real\left(\frac{a}{b^2}\right)+\frac{|a|^2}{|b|^2-1}\right)\qquad\qquad\qquad\qquad& j=0\\
\,\\
\frac{1}{2}\left(-\frac{a}{b^3}-\frac{\bar{a}}{\bar{b}}+\frac{|a|^2}{b(|b|^2-1)}\right) & j=1\\
\,\\
\frac{1}{2b^j}\left(-\frac{a}{b^2}+\frac{|a|^2}{(|b|^2-1)}\right) & j\geq2
\end{cases}
\]
Since
\begin{align}\label{fsum}
F(z)=h_0+2\sum_{j=1}^{\infty}h_jz^j,
\end{align}
we have
\begin{align*}
&F(z)\\
&\,=\frac{1}{2}\left(1-2\Real\left(\frac{a}{b^2}\right)+\frac{|a|^2}{|b|^2-1}\right)+\left(\frac{|a|^2}{b(|b|^2-1)}-\frac{a}{b^3}-\frac{\bar{a}}{\bar{b}}\right)z+\left(\frac{|a|^2}{(|b|^2-1)}-\frac{a}{b^2}\right)\sum_{j=2}^{\infty}\left(\frac{z}{b}\right)^j\\
&\,=\frac{1}{2}\left(1-2\Real\left(\frac{a}{b^2}\right)+\frac{|a|^2}{|b|^2-1}\right)+\left(\frac{|a|^2}{b(|b|^2-1)}-\frac{a}{b^3}-\frac{\bar{a}}{\bar{b}}\right)z-\left(\frac{|a|^2}{(|b|^2-1)}-\frac{a}{b^2}\right)\frac{z^2}{b(z-b)}\\
&\,=\frac{1}{2}\left(1-2\Real\left(\frac{a}{b^2}\right)+\frac{|a|^2}{|b|^2-1}\right)+\left(\frac{|a|^2}{b(|b|^2-1)}-\frac{a}{b^3}-\frac{\bar{a}}{\bar{b}}\right)z-\left(\frac{|a|^2}{(|b|^2-1)}-\frac{a}{b^2}\right)\frac{z^2\psi(z)-z^3}{ab}
\end{align*}
It is easy to check that
\[
\varphi(z)=\frac{b+z-\sqrt{b^2+z^2-4a-2bz}}{2}
\]
is the conformal bijection from $\Omega\rightarrow\bbD$ that is inverse to $\psi$, so one can explicitly write down the Bergman projection of $\bar{z}$ and the stress function of $\Omega$ using Theorem \ref{fform}.  Indeed, we have
\begin{align*}
Q(z)&=\left(\frac{|a|^2}{b(|b|^2-1)}-\frac{a}{b^3}-\frac{\bar{a}}{\bar{b}}\right)\varphi'(z)-\left(\frac{|a|^2}{(|b|^2-1)}\right)\frac{(z\varphi(z)^2-\varphi(z)^3)'}{ab},\\
\nu(z)&=\Real\bigg[\frac{1}{2}\left(1-2\Real\left(\frac{a}{b^2}\right)+\frac{|a|^2}{|b|^2-1}\right)+\left(\frac{|a|^2}{b(|b|^2-1)}-\frac{a}{b^3}-\frac{\bar{a}}{\bar{b}}\right)\varphi(z)\\
&\qquad\qquad\qquad\qquad\qquad\qquad\qquad-\left(\frac{|a|^2}{(|b|^2-1)}-\frac{a}{b^2}\right)\frac{z\varphi(z)^2-\varphi(z)^3}{ab}\bigg]-\frac{|z|^2}{2}
\end{align*}
%Note that $Q(z)$ shares its domain of analyticity with $\varphi$.

To calculate the torsional rigidity, we use the formula (\ref{rhofform}).  We have
\begin{align*}
\rho(\Omega)&=\frac{1}{4}\int_{\bbD}\left|2z+\sum_{j=0}^{\infty}\left(\frac{(j+1)a}{b^{j+1}}\left(\frac{a(j+2)}{b^{2}}-2\right)\right)z^j\right|^2dA(z)\\
&\qquad\qquad-\int_{\bbD}\left|\left(\frac{|a|^2}{b(|b|^2-1)}-\frac{a}{b^3}-\frac{\bar{a}}{\bar{b}}\right)+\left(\frac{|a|^2}{(|b|^2-1)}-\frac{a}{b^2}\right)\sum_{j=1}^{\infty}\frac{(j+1)z^j}{b^j}\right|^2dA(z)\\
%&=\frac{1}{4}\int_{\bbD}\left|2z+\sum_{j=0}^{\infty}\left(\frac{(j+1)a}{b^{j+1}}\left(\frac{a(j+2)}{b^{2}}-2\right)\right)z^j\right|^2dA(z)\\
%&\qquad-2\pi\left(\frac{1}{2}\left|\frac{|a|^2}{b(|b|^2-1)}-\frac{a}{b^3}-\frac{\bar{a}}{\bar{b}}\right|^2+\left|\frac{|a|^2}{(|b|^2-1)}-\frac{a}{b^{2}}\right|^2\sum_{j=1}^{\infty}\frac{(j+1)}{2|b|^{2j}}\right)\\
&=\pi\bigg(\left|\frac{a}{b}\left(\frac{a}{b^2}-1\right)\right|^2-\left(\left|\frac{|a|^2}{b(|b|^2-1)}-\frac{a}{b^3}-\frac{\bar{a}}{\bar{b}}\right|^2+\left|\frac{|a|^2}{(|b|^2-1)}-\frac{a}{b^2}\right|^2\cdot\frac{(2-|b|^{-2})}{(|b|^2-1)^2}\right)\bigg)\\
&\qquad+\frac{1}{4}\sum_{j=2}^{\infty}\frac{(j+1)|a|^2}{|b|^{2j+2}}\left|\frac{a(j+2)}{b^{2}}-2\right|^2+\frac{1}{2}\left|1+\frac{a(3ab^{-2}-2)}{b^2}\right|^2.
\end{align*}
Notice that if $b\rightarrow\infty$ or if $a\rightarrow0$, then the torsional rigidity approaches $\pi/2=\rho(\bbD)$, which is what we expect.  A similar phenomenon occurs if $a$ and $b$ tend to infinity together and in such a way that $a/b$ approaches a constant.

%\smallskip

%Let us calculate $F$ in a second way, using the ideas from the example in Section \ref{pkapproach}.  Indeed,
%\[
%\psi(z)=\frac{z^2-bz+a}{z-b}=:\frac{(z-r_1)(z-r_2)}{z-b}.
%\]
%By a translation, we may assume that $\psi\neq0$ on $\overline{\bbD}$, which means $|r_1|,|r_2|>1$.  Therefore,
%\[
%|\psi(\eitheta)|^2=\left|\frac{1}{(\eitheta-b)(\bar{r}_1\eitheta-1)(\bar{r}_2\eitheta-1)}\right|^2
%\]

\subsection{Neumann's Oval}\label{oval}

This set was explored in an example in \cite{EL}.  Its boundary is described as all pairs $(x,y)$ so that
\[
(x^2+y^2)^2=a^2(x^2+y^2)+4x^2
\]
for some parameter $a>0$.  The conformal bijection from $\bbD$ to this region is given by
\[
\psi(z)=\frac{(R^4-1)z}{R(R^2-z^2)},
\]
where
\[
R=\frac{a+\sqrt{a^2+4}}{2}>1.
\]
With this information, one can compute the torsional rigidity and stress function.

First, we may write 
%\[
%\frac{1}{R^{2}-z^{2}}=\frac{1}{R^{2}\left(1-\frac{z^{2}}{R^{2}}\right)}=\frac{1}{R^{2}}\sum_{n=0}^{\infty}\left(\frac{z^{2}}{R^{2}}\right)^{n}.
%\]
%Thus, 
\[
\psi(z)=\frac{(R^{4}-1)z}{R\left(R^{2}-z^{2}\right)}=\frac{R^{4}-1}{R^{3}}\sum_{n=0}^{\infty}\frac{z^{2n+1}}{R^{2n}},
\]
%and so 
%\[
%e^{i(k+1)\theta}\psi(e^{i\theta})=\frac{R^{4}-1}{R^{3}}\sum_{n=0}^{\infty}\frac{e^{i(2n+2+k)\theta}}{R^{2n}}.
%\]
If we define $h_j$ as in the previous example, then we use this sum to calculate 
\begin{align*}
h_{k}&=\frac{1}{2}\int_{0}^{2\pi}e^{-ik\theta}\left|\psi(e^{i\theta})\right|^{2}\frac{d\theta}{2\pi}=\frac{1}{2}\int_{0}^{2\pi}e^{-ik\theta}\psi(e^{i\theta})\overline{\psi(e^{i\theta})}\frac{d\theta}{2\pi}\\
%&=\frac{(R^{4}-1)^{2}}{2R^{6}}\int_{0}^{2\pi}\left(\sum_{n=0}^{\infty}\frac{e^{i(2n+2+k)\theta}}{R^{2n}}\right)\left(\sum_{m=0}^{\infty}\frac{e^{-i(2m+1)\theta}}{R^{2m}}\right)d\theta\\
&=\frac{(R^{4}-1)^{2}}{2R^{6}}\sum_{n=0}^{\infty}\sum_{m=0}^{\infty}\int_{0}^{2\pi}\left(\frac{e^{i(2n+1-k)\theta}}{R^{2n}}\right)\left(\frac{e^{-i(2m+1)\theta}}{R^{2m}}\right)\frac{d\theta}{2\pi}.
\end{align*}
When $k$ is odd, it is clear from orthogonality that $h_k=0$.  When $k=2j$, 
\[
\int_{0}^{2\pi}\left(\frac{e^{i(2n+1-k)\theta}}{R^{2n}}\right)\left(\frac{e^{-i(2m+1)\theta}}{R^{2m}}\right)\frac{d\theta}{2\pi}=\begin{cases}
0 & 2m+1\neq2n+1-k\\
\frac{1}{R^{4n-2j}} & m=n-j
\end{cases}.
\]
Summing over all $n$ from $j$ to $\infty$ shows
\[
h_{k}=\begin{cases}
\frac{R^{4}-1}{2R^{2j+2}} & k=2j\\
0 & k=2j+1
\end{cases}\qquad j=0,1,2,\ldots.
\]
We conclude from (\ref{fsum}) that
\begin{align*}
F(z)&=F(0)+2\sum_{k=1}^{\infty}h_{k}z^{k}=\frac{R^4-1}{2R^2}+\frac{(R^4-1)}{R^2}\sum_{j=1}^{\infty}\frac{z^{2j}}{R^{2j}}\\
&=\frac{R^4-1}{2R^2}+\frac{(R^4-1)z^2}{R^2(R^2-z^2)}=\frac{R^4-1}{2R^2}+\frac{z\psi(z)}{R}.
\end{align*}

Now, 
\[
\psi(z)\psi'(z)=\frac{\left(R^{4}-1\right)^{2}}{R^{2}}\left(\frac{R^{2}z+z^3}{\left(R^{2}-z^{2}\right)^{3}}\right)=\frac{\left(R^{4}-1\right)^{2}}{R^{6}}\left[\frac{z}{\left(1-\frac{z^{2}}{R^{2}}\right)^{3}}+\frac{z^{3}}{R^{2}\left(1-\frac{z^{2}}{R^{2}}\right)^{3}}\right].
\]
Since
\[
\frac{z}{\left(1-\frac{z^{2}}{R^{2}}\right)^{3}}=\frac{d}{dz}\frac{R^2/4}{\left(1-\frac{z^{2}}{R^{2}}\right)^{2}}=\frac{d}{dz}\frac{R^{2}}{4}\sum_{n=0}^{\infty}\left(n+1\right)\left(\frac{z^{2}}{R^{2}}\right)^{n}=\frac{R^{2}}{4}\sum_{n=0}^{\infty}2n\left(n+1\right)\frac{z^{2n-1}}{R^{2n}},
\]
we may write 
\begin{align*}
\psi(z)\psi'(z)&=\frac{\left(R^{4}-1\right)^{2}}{R^{6}}\left[\frac{R^{2}}{4}\sum_{n=0}^{\infty}2\left(n+1\right)\left(n+2\right)\frac{z^{2n+1}}{R^{2\left(n+1\right)}}+\frac{1}{4}\sum_{n=0}^{\infty}2n\left(n+1\right)\frac{z^{2n+1}}{R^{2n}}\right]\\
&=\frac{\left(R^{4}-1\right)^{2}}{R^{6}}\sum_{n=0}^{\infty}(n+1)^2\frac{z^{2n+1}}{R^{2n}}=\frac{(R^4-1)^2z(R^2+z^2)}{R^2(R^2-z^2)^3}.
%&=\frac{\left(R^{4}-1\right)^{2}(R^2+z)}{2R^{8}}\sum_{n=0}^{\infty}\left(n+1\right)\left(n+2\right)\frac{z^{2n+1}}{R^{2n}}
\end{align*}
Thus we can compute the Bergman norm of $\psi(z)\psi'(z)$ as 
\[
\left\Vert \psi(z)\psi'(z)\right\Vert _{\mathcal{B}}^{2}=\frac{\left(R^{4}-1\right)^{4}}{2R^{12}}\sum_{n=0}^{\infty}\frac{(n+1)^{3}}{R^{4n}}.
\]
Using this and \eqref{rhofform}, we compute torsional rigidity as 
\begin{align*}
\rho(\Omega)&=\frac{\pi\left(R^{4}-1\right)^{4}}{2R^{12}}\sum_{n=0}^{\infty}\frac{(n+1)^{3}}{R^{4n}}-\pi\sum_{j=0}^{\infty}2j\frac{(R^{4}-1)^2}{R^{4j+4}}=\pi\left(\frac{R^4+R^{-4}}{2}\right)=\pi\left(\frac{a^4}{2}+2a^2+1\right).
\end{align*}
Notice that this grows like $\pi R^4/2$ as $R\rightarrow\infty$, which is the torsional rigidity of the disk of radius $R$.

It is easy to check that
\[
\varphi(z)=\frac{1-R^4+\sqrt{(R^4-1)^2+4R^4z^2}}{2Rz}
\]
and hence we can use Theorem \ref{fform} and our formula for $F(z)$ to write the stress function
\begin{equation*}%\label{ovalstress}
\nu(z)=\frac{R^4-1}{2R^2}+\frac{\Real[z\varphi(z)]}{R}-\frac{|z|^2}{2}
\end{equation*}
and the Bergman projection of $\bar{z}$ is %(according to \eqref{projform})
\begin{equation*}%\label{ovalproj}
Q(z)=\frac{\varphi(z)+z\varphi'(z)}{R}.
\end{equation*}
%We see that the Bergman projection is analytic wherever $\varphi$ is analytic.

\vspace{7mm}

%\newpage

\vspace{8mm}

\noindent Matthew Fleeman

\noindent\small{Matthew$\_$Fleeman@baylor.edu}

\bigskip

\noindent Brian Simanek

\noindent\small{Brian$\_$Simanek@baylor.edu}

\end{document}